\newtheorem{theorem}{Theorem}[section]
\newtheorem{proposition}[theorem]{Proposition}
\theoremstyle{definition}
\newtheorem{definition}[theorem]{Definition}
\theoremstyle{remark}
\newtheorem{remark}[theorem]{Remark}
\newtheorem{note}[theorem]{Note}
\numberwithin{equation}{section}
\begin{document}

\setcounter{page}{1}

\title[Fusion frame duals]{Dual fusion frames in the sense of Kutyniok, Paternostro and Philipp with applications to invertible Bessel fusion multipliers}

\author[H. Javanshiri, A. Fattahi, \MakeLowercase{and} M. Sargazi]{Hossein Javanshiri$^1$, Abdolmajid Fattahi$^2$ \MakeLowercase{and} Mojtaba Sargazi$^3$}

\address{$^{1}$ Department of Mathematics, Yazd University, Iran.}
\email{\textcolor[rgb]{0.00,0.00,0.84}{h.javanshiri@yazd.ac.ir;}}

\address{$^{2,3}$ Department of Mathematics, Kermanshah University, Iran.}
\email{\textcolor[rgb]{0.00,0.00,0.84}{majidzr@razi.ac.ir;}}

\email{\textcolor[rgb]{0.00,0.00,0.84}{sargazi1984@gmail.com.}}



\subjclass[2010]{Primary 42C15, 41A58; Secondary 46C05, 42C40.}

\keywords{Fusion frame, operator-valued frame, fusion frame dual, Bessel fusion multiplier, Riesz basis.}


\begin{abstract}
To achieve our main research goal, first we survey the approaches towards dual fusion frames existing in the literature and agree on the notion of duality for fusion frames in the sense of Kutyniok, Paternostro and Philipp ({\it Oper. Matrices} {\bf11} (2017), no. 2, 301--336). As a main result we show that different fusion frames have different dual fusion frames. Moreover, this duality notion leads to a new definition of Bessel fusion multipliers
which is a slightly modified version of the commonly used definitions. Particularly, we show that with this definition in many cases Bessel fusion multipliers behave similar to ordinary Bessel multipliers.
Finally, special attention is devoted to the study of dual fusion frames induced by an invertible Bessel fusion multiplier.
\end{abstract} \maketitle


\section{Introduction}

Due to the fundamental work done by Casazza and Kutyniok \cite{6},
fusion frames (or frame of subspaces) was formally introduced and popularized from
then on. Fusion frames play an important role in theory and applications; In details,
recent studies shows that fusion frames provide effective frameworks for modeling of sensor networks, signal and image processing, filter banks and variety of applications that cannot be modeled by ordinary frames \cite{ap1,ap2,ap3}.
The essence of fusion frames is that we can first build ordinary frames in subspaces, called local frames, and then piece them together to obtain ordinary frames for the whole space. For some applications it is important to find dual fusion frames of a given fusion frame to reconstruct the modified data and compare it with the original data. In this paper, we are going to give new results on the duality of fusion frames as well as dual fusion frames induced by an invertible Bessel fusion multiplier.
This results are not
only of interest on their own, but they also paves the way
to have different reconstruction strategies in terms of fusion frames.
First, let us to review the notions of duality for fusion frames existing in the literature. To this end, we need to recall some notations and definitions.

Throughout this paper, we denote by $\mathcal H$ a separable Hilbert space with the inner
product $\big<\cdot,\cdot\big>$ and $I$ refers to a finite
or countable index set. The notation $\mathcal W$ is used to denote the sequence $\{W_i\}_{i\in I}$ of closed subspaces of $\mathcal{H}$ and $\omega$ denotes a sequence of weights $\{\omega_{i}\}_{i\in I}$, that is, $\omega_{i}\geq 0$. Also, $(\mathcal{W},\omega)$ refers to the fusion sequence $\{(W_i,\omega_{i})\}_{i\in I}$, that is,
$$W_i=\{0\}\quad\Longleftrightarrow\quad \omega_i=0\quad\quad\quad(i\in I).$$
Recall from \cite{6} (and also \cite{16}) that the fusion sequence $(\mathcal{W},\omega)$ is a fusion frame for ${\mathcal H}$, if there exist constants $0<\alpha_{{\mathcal W}_{\omega}}\leq \beta_{{\mathcal W}_{\omega}}<\infty$ such that
\begin{equation}\label{p1}
\alpha_{{\mathcal W}_{\omega}}\|x\|^{2} \leq \sum_{i\in I} \omega_{i}^{2} \|P_{W_i}x\|^{2} \leq \beta_{{\mathcal W}_{\omega}}\|x\|^{2}\quad\quad\quad\quad\quad (x\in {\mathcal H}),
\end{equation}
where $P_{W_i}$ denotes the orthogonal projection from $\mathcal H$ onto ${W_i}$.
The constants $\alpha_{{\mathcal W}_{\omega}}$ and $\beta_{{\mathcal W}_{\omega}}$ are called lower and upper fusion frame bound, respectively. If only the right inequality of (\ref{p1}) is
considered, then $(\mathcal{W},\omega)$ is called a Bessel fusion sequence. Recall also from \cite{6} that for each sequence $\{W_i\}_{i\in I}$ of closed subspaces of $\mathcal{H}$, the space
\begin{equation}\label{h-parsa}
{\mathcal K}_{\mathcal W}:={\Big(}\sum_{i\in I}\oplus W_i{\Big)}_{\ell^2}=\Big\{ \{z_{i}\}_{i\in I}:~z_{i} \in W_i~~{\hbox{and}}~\sum_{i\in I}\|z_i\|^2<\infty\Big\},
\end{equation}
with the inner product $\langle \{z_{i}\}_{i\in I} , \{y_{i}\}_{i\in I} \rangle = \sum_{i\in I} \langle z_{i} , y_{i} \rangle$ is a Hilbert space.
For a Bessel fusion sequence $(\mathcal{W},\omega)$ of $\mathcal H$, the analysis operator $T_{\mathcal W}$ is defined by
$$T_{\mathcal W}:{\mathcal H}\rightarrow{\mathcal K}_{\mathcal W};\quad\quad x\mapsto \{\omega_i P_{W_i}(x)\}_{i\in I}.$$
Its adjoint $T^*_{\mathcal W}$, the
synthesis operator of $(\mathcal{W},\omega)$, maps ${\mathcal K}_{\mathcal W}$ into $\mathcal H$ and defined by $T^*_{\mathcal W}(\{x_i\}_{i\in I})=\sum_{i\in I}\omega_i x_i$. Moreover, the fusion frame operator $S_{\mathcal W}:{\mathcal H}\rightarrow {\mathcal H}$ is defined by $S_{\mathcal W}(x)=\sum_{i\in I}\omega_i^2 P_{W_i}(x)$, which is a bounded, invertible and positive operator; see \cite{6} for more information.

Now, we are in position to discuss and compare the notions of duality for fusion frames existing in the literature. As far as we know the subject, the first definition was presented by G$\check{a}$vru$\c{t}$a \cite{13}. This paper initiated a series of subsequent publications and has had a great impact; see for example \cite{gav2,gav3}. A Bessel fusion sequence $({\mathcal V},\upsilon)=\{V_i,\upsilon_i\})_{i\in I}$ is called a G$\check{a}$vru$\c{t}$a-dual fusion frame of $({\mathcal W},\omega)$ if
\begin{eqnarray}\label{p2}
f=\sum_{i\in I}\omega_i\upsilon_i P_{V_i}S^{-1}_{\mathcal W}P_{W_i}(f)\quad\quad\quad(f\in{\mathcal H}).
\end{eqnarray}
It was shown in \cite{6} that a G$\check{a}$vru$\c{t}$a-dual $\mathcal V$ of $\mathcal W$ is itself a fusion frame. We should however note that it is in general not true that $\mathcal W$ is a G$\check{a}$vru$\c{t}$a-dual of $\mathcal V$. Moreover, we would
like to mention that, in contrast to ordinary frames, equality (\ref{p2}) cannot be generally expressed in terms of corresponding synthesis and analysis operators. Motivated by these facts a general approach to dual fusion frames has been proposed by Heineken et al. \cite{14}. More precisely, in \cite{14}, a Bessel fusion sequence $({\mathcal V},\upsilon)$ was called a dual fusion frame of $({\mathcal W},\omega)$ if there exists a bounded operator
$$Q:{\mathcal K}_{\mathcal W}\rightarrow
{\mathcal K}_{\mathcal V}$$
such that $T_{\mathcal V}QT^*_{\mathcal W}=Id_{\mathcal H}$ or equivalently
\begin{equation}\label{q-parsa}
x=\sum_{i\in I}\upsilon_i{\Big(}Q(\omega_jP_{W_j}x){\Big)}_i\quad\quad\quad(x\in{\mathcal H}),
\end{equation}
where here and in the sequel
$Id_{\mathcal H}$ is the identity operator on $\mathcal H$. Here, we shall call these fusion sequences HMBZ-duals.
This notion of duality paves the way to obtain results
which are analogous to those valid for ordinary dual frames. Moreover, the authors of \cite{14} present a very good analysis of the duality relation between two Bessel fusion sequences. They in particular characterize the set of all component preserving HMBZ-duals of a certain fusion frame; see Theorem 3.11 of \cite{14}. As we understand from \cite[Subsection 3.2]{16}, any two fusion frames for $\mathcal H$ are HMBZ-duals.
In details, there is too much freedom in the choice of the operator $Q$ in
the definition of HMBZ-duals. Motivated by this fact,
Kutyniok et al. \cite{16} considered a modified version of HMBZ-duals which is a generalization of the idea in \cite{hein}.

This notion of duality for fusion frame in the sense of \cite{16}, as the main object of study
of this work, will be explained and justified in the next sections.
Moreover, based on this duality notion for fusion frames, we
propose a new concept of Bessel fusion multipliers in Hilbert spaces, which
is a slightly modified version of \cite{ar,msh}. It extends the commonly used notion
and, in particular, we show that with this definition in many cases Bessel fusion
multipliers behave similar to ordinary Bessel multipliers.


\section{Operator-valued frames: an overview}

In this section, we give a brief overview of operator-valued frames which includes ordinary frames and fusion frames as elementary examples.
To this end, we need to set some notations that will be used throughout the paper.
In what follows, $\ell^p(I)$ ($1\leq p<\infty$), $\ell^\infty(I)$ and $c_0(I)$ have their usual meanings and $\mathcal H$ and
$\mathcal K$ always denote Hilbert spaces.
The set of all bounded linear operators from $\mathcal H$ into $\mathcal K$ will be denoted by $B({\mathcal H},{\mathcal K})$. As usual, we set $B({\mathcal H}):=B({\mathcal H},{\mathcal H})$. An operator $T\in B({\mathcal H},{\mathcal K})$ is said to be in the Schatten $p$-class if $\{\lambda_i\}_{i\in I}\in\ell^p(I)$ where $\{\lambda_i\}_{i\in I}$ is the sequence of positive
eigenvalues of $|T|=(T^*T)^{1/2}$ arranged in decreasing order and repeated according to multiplicity. Given $1\leq p<\infty$,
$C_p({\mathcal H},{\mathcal K})$ denotes the Schatten $p$-class.
For an operator $T\in B({\mathcal H},{\mathcal K})$,
\begin{itemize}
\item the notations ${\rm ran}(T)$ and $\ker(T)$ are used to denote the range and the kernel of $T$, respectively
\item the letter $T|_V$ refers to the restriction of $T$ to a subspace $V\subset {\mathcal H}$.
\end{itemize}
Following (\ref{h-parsa}), for Hilbert spaces $\mathcal H$ and $\mathcal K$, we set
\begin{eqnarray*}
\mathfrak{H}:={\Big(}\sum_{i\in I}\oplus {\mathcal H}{\Big)}_{\ell^2}\quad\quad\quad{\hbox{and}}\quad\quad\quad    \mathfrak{K}:={\Big(}\sum_{i\in I}\oplus {\mathcal K}{\Big)}_{\ell^2}.
\end{eqnarray*}
Moreover, the notation $\ell^\infty(I,B({\mathcal H}))$ is used to denote the set $${\Big\{}{\mathcal R}=\{R_i\}_{i\in I}\subset B({\mathcal H}):~\|{\mathcal R}\|_\infty:=\sup_{i\in I}\|R_i\|<\infty{\Big\}},$$
and $C_0(I,B({\mathcal H}))$ refers to the set of all
${\mathcal R}\subset B({\mathcal H})$ with the property that for each $\varepsilon>0$ there exists a finite set $J\subseteq I$ with $\sup_{i\in I\setminus J}\|R_i\|<\varepsilon$.

Recall from \cite{16} (and also \cite{kaf}) that a sequence
${\mathcal A}=\{A_i\}_{i\in I}\subset B({\mathcal H},{\mathcal K})$ is an operator-valued (or $B({\mathcal H},{\mathcal K})$-valued) frame, if there exist constants $\alpha_{\mathcal A}$ and $\beta_{\mathcal A}$ such that
\begin{equation}\label{p01}
\alpha_{\mathcal A}\|x\|^2\leq\sum_{i\in I}\|A_ix\|^2\leq \beta_{\mathcal A}\|x\|^2\quad\quad\quad(x\in{\mathcal H}).
\end{equation}
The constants $\alpha_{\mathcal A}$ and $\beta_{\mathcal A}$ are called lower and upper frame bound of $\mathcal A$, respectively.
If only
the right inequality of (\ref{p01}) is considered, then
$\mathcal A$ is called an operator-valued Bessel sequence.

For an operator-valued Bessel sequence ${\mathcal A}\subset B({\mathcal H},{\mathcal K})$, the notation $T_{\mathcal A}:{\mathcal H}\rightarrow \mathfrak{K}$ with $T_{\mathcal A}x:=\{A_ix\}_{i\in I}$ denotes the
associated analysis operator. Its adjoint $T_{\mathcal A}^*$, the synthesis operator of $\mathcal A$, maps $\mathfrak{K}$ into $\mathcal H$ and defined by $T_{\mathcal A}^* \{z_i\}_{i\in I}:=\sum_{i\in I} A_i^*z_i$ for all $\{z_i\}_{i\in I}\in{\mathfrak{K}}$.
The operator $S_{\mathcal A}:=T_{\mathcal A}^* T_{\mathcal A}$ is called the frame operator corresponding to ${\mathcal A}$ and it is a positive bounded self-adjoint operator.
In particular, for an operator-valued frame ${\mathcal A}$ the operator $S_{\mathcal A}$ is invertible.

It is worthwhile to recall from \cite[Subsections 2.2 and 2.3]{16} that ordinary frames as well as fusion frames
can be considered as operator-valued frames. In details, suppose that $\Phi=\{\varphi_i\}_{i\in I}$ is a sequence (of vectors) in $\mathcal H$. If, for every $i\in I$, we define an operator $A_i^\Phi\in B({\mathcal H},{\mathbb C})$ by $A_i^\Phi x:=\big<x,\varphi_i\big>$ ($x\in {\mathcal H}$). Then, it is clear that ${\mathcal A}_{\Phi}:=\{A_i^\Phi\}_{i\in I}$ is an operator-valued frame if and only if $\Phi$ is a frame for $\mathcal H$, that is, there exist $\alpha_{{\mathcal A}_{\Phi}}, \beta_{{\mathcal A}_{\Phi}}>0$ such that
\begin{equation}
\alpha_{{\mathcal A}_{\Phi}}\|x\|^2\leq\sum_{i\in I}|\big<x,\varphi_n\big>|^2\leq \beta_{{\mathcal A}_{\Phi}}\|x\|^2\quad\quad\quad(x\in{\mathcal H}).
\end{equation}
In particular, the analysis and frame operator of the Bessel sequence $\Phi$ are defined by $T_\Phi:=T_{{\mathcal A}_\Phi}$ and $S_\Phi:=S_{{\mathcal A}_\Phi}$, respectively.
The reader will remark that if ${\mathcal K}={\mathbb C}$, then $\mathfrak{K}=\ell^2(I)$. Hence the above definitions is consistent with the corresponding definitions in the concept of ordinary frames; see \cite[Page 9 and 10]{16} for more information.
Similarly, a fusion sequence $({\mathcal W},\omega)$ can be identified with the sequence of operators ${\mathcal A}_{\mathcal W}:=\{\omega_iP_{W_i}\}_{i\in I}$ which is
a $B({\mathcal H})$-valued frame for $\mathcal H$ if and only if $({\mathcal W},\omega)$ is a fusion frame for $\mathcal H$. The analysis operator and the fusion frame operator of the Bessel fusion sequence $({\mathcal W},\omega)$ as an operator-valued Bessel sequence
are then defined by $T_{{\mathcal W},\omega}:= T_{{\mathcal A}_{\mathcal W}}$ and $S_{{\mathcal W},\omega}:=S_{{{\mathcal A}_{\mathcal W}}}$, respectively.
At this point we would like to remark that $T_{{\mathcal W},\omega}$ is an operator from $\mathcal H$ into $\mathfrak{H}$ whereas the operator $T_{\mathcal W}$ is an operator from $\mathcal H$ into ${\mathcal K}_{\mathcal W}$, by what was mentioned in Section 1.
With this notation, we can give a characterization of fusion frames in terms of the associated synthesis and
analysis operators. In details, the fusion sequence $({\mathcal W},\omega)$ is a
\begin{itemize}
\item Bessel fusion sequence if and only if $T_{{\mathcal W},\omega}^*$ is well-defined and bounded with $\|T_{{\mathcal W},\omega}^*\|\leq\sqrt{\beta_{{\mathcal W}_\omega}}$;
\item fusion frame if and only if $T_{{\mathcal W},\omega}^*$ is onto;
\item Riesz basis if and only if $T_{{\mathcal W},\omega}^*$ is invertible, that is, injective.
\end{itemize}

Recall also from \cite{16} that an operator-valued Bessel sequence ${\mathcal B}=\{B_i\}_{i\in I}\subset B({\mathcal H},{\mathcal K})$ is a dual operator-valued frame (or simply a dual) of an operator-valued frame
${\mathcal A}=\{A_i\}_{i\in I}$ if
$$T^*_{\mathcal B}T_{\mathcal A}x=\sum_{i\in I}B_i^*A_ix=x\quad\quad\quad(x\in{\mathcal H}).$$
By $D({\mathcal A})$ we denote the set of all duals of the operator-valued frame $\mathcal A$. In particular, it is shown in \cite[Lemma 3.2]{16} that
$$D({\mathcal A})={\Big\{}T_{\mathcal A}S_{\mathcal A}^{-1}+L:~L\in{\mathcal L}_{\mathcal A}{\Big\}},$$
where ${\mathcal L}_{\mathcal A}:=\{L\in B({\mathcal H},{\mathfrak{K}}):~L^*T_{\mathcal A}=0\}$.
From now on, we denote by $\widetilde{{\mathcal A}}(L)$ the dual of $\mathcal A$ corresponding to $L\in {\mathcal L}_{\mathcal A}$. Particularly, $\widetilde{{\mathcal A}}(0)$ is the canonical dual of $\mathcal A$ and
$$T_{\widetilde{A}(L)}=T_{\mathcal A}S_{\mathcal A}^{-1}+L,$$
for all $L\in {\mathcal L}_{\mathcal A}$.


\section{Main results}

We start this section by stating the definition of dual fusion frames in the sense of \cite{16}. It is essentially \cite[Definition 3.10]{16}, although
that definition is stated in terms of identity operator.

In this definition and in the sequel, for two fusion sequences
$({\mathcal V},\upsilon)$ and $({\mathcal W},\omega)$ in $\mathcal H$, the notation $I_0({\mathcal V},{\mathcal W})$ is used to denote the following subset of $I$:
\begin{align*}
{\Big\{}i\in I:~V_i=\{0\}~{\hbox{or}}~ W_i=\{0\}{\Big\}},
\end{align*}
which is equal to the set of all $i\in I$ such that $\upsilon_i=0$ or $\omega_i=0$. Moreover,
a sequence $\{Q_{i}\}_{i\in I}\subset B({\mathcal H})$ will be called $({\mathcal V},{\mathcal W})$-admissible if for each $i\in I$
\begin{eqnarray*}
W_i^\perp\subset\ker(Q_i),\quad {\rm ran}(Q_i)\subset V_i,\quad{\hbox{and}}\quad\|Q_i\|=1\quad{\hbox{if}}\quad i\notin I_0({\mathcal V},{\mathcal W}).
\end{eqnarray*}

\begin{definition}
Let $(\mathcal{W},\omega)$ be a fusion frame for ${\mathcal H}$. A Bessel fusion sequence $(\mathcal{V},\upsilon)$ is a (resp. generalized) fusion frame dual of $(\mathcal{W},\omega)$ if there exists a $({\mathcal V},{\mathcal W})$-admissible sequence $\{Q_{i}\}_{i\in I}\subset B({\mathcal H})$ such that $T_{\mathcal{V},\upsilon}^{*}D_QT_{\mathcal{W},\omega}$
is (resp. identity) invertible operator on $\mathcal H$, where
$$D_Q:{\mathfrak{H}}\rightarrow{\mathfrak{H}};\quad \{z_i\}_{i\in I}\mapsto\{Q_i z_i\}_{i\in I}.$$
\end{definition}


At first we note that an argument similar to the proof of \cite[Theorem 3.13.]{16} with the aid of
\cite[Lemma 3.2.]{16} and \cite[Theorem 2.1]{j}  gives the following analogue of that theorem for generalized fusion frame duals, so we avoid the burden of proof.

\begin{proposition}\label{p8}
Let $(\mathcal{W},\omega)$ be a fusion frame for $\mathcal H$ and $(\mathcal{V},\upsilon)$ a Bessel fusion sequence. Then the following assertions are equivalent:
\begin{enumerate}
\item $(\mathcal{V},\upsilon)$ is a fusion frame dual of $(\mathcal{W},\omega)$.
\item There exists a $({\mathcal V},{\mathcal W})$-admissible  sequence $\{Q_{i}\}_{i\in I}\subset B({\mathcal H})$ such that the Bessel sequence  $\{\upsilon_{i}Q_{i}^{*}\}_{i\in I}$ is a generalized dual operator-valued frame of ${\mathcal A}_{\mathcal W}$.
\item There exists an operator-valued Bessel sequence $\mathcal{L}=\{L_{i}\}_{i\in I}\subset B({\mathcal H})$ with $T_{\mathcal{L}}^{*}T_{\mathcal{W},\omega}=0$ and invertible operator $U\in B({\mathcal H})$ such that for the operators $A_{i}:=( \omega_{i}US_{\mathcal{W},\omega}^{-1}+L_{i}^{*}) P_{{W}_{i}}$ we have ${\rm ran} A_{i} \subset {V}_{i}$ for all $i\in I$ and if $i \notin I_{0}(\mathcal{V},\mathcal{W})$, $\|A_{i}\|=\upsilon_{i}$.
\end{enumerate}
\end{proposition}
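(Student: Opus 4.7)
The plan is to reduce all three statements to a single structural description of generalized dual operator-valued frames of ${\mathcal A}_{\mathcal W} = \{\omega_i P_{W_i}\}_{i\in I}$, in parallel with the argument for Theorem 3.13 of [16], and then translate back and forth using the admissibility conditions.

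I would start with the equivalence of (1) and (2), which is essentially bookkeeping. The admissibility of $\{Q_i\}_{i\in I}$ forces the identities $Q_i P_{W_i} = Q_i$ (from $W_i^\perp \subset \ker Q_i$) and $P_{V_i} Q_i = Q_i$ (from ${\rm ran}(Q_i)\subset V_i$). Using these together with the fact that ${\mathcal A}_{\mathcal V} = \{\upsilon_i P_{V_i}\}$ and ${\mathcal A}_{\mathcal W} = \{\omega_i P_{W_i}\}$, a direct computation gives
\begin{equation*}
T_{\mathcal V,\upsilon}^* D_Q T_{\mathcal W,\omega}\, x \;=\; \sum_{i\in I} \upsilon_i\omega_i\, P_{V_i} Q_i P_{W_i}\, x \;=\; \sum_{i\in I} (\upsilon_i Q_i^*)^* (\omega_i P_{W_i})\, x \;=\; T_{\{\upsilon_i Q_i^*\}}^* T_{{\mathcal A}_{\mathcal W}}\, x.
\end{equation*}
Hence the left-hand side is invertible precisely when $\{\upsilon_i Q_i^*\}_{i\in I}$ is a generalized dual of ${\mathcal A}_{\mathcal W}$, which yields (1) $\Leftrightarrow$ (2).

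The substance of the proof is the equivalence (2) $\Leftrightarrow$ (3), and here I would invoke Lemma 3.2 of [16] (parametrization of ordinary duals) together with Theorem 2.1 of [j] to describe generalized duals. The key observation is that $\mathcal B = \{B_i\}$ satisfies $T_{\mathcal B}^* T_{{\mathcal A}_{\mathcal W}} = U$ with $U$ invertible iff $\{U^{-*} B_i\}$ is an ordinary dual of ${\mathcal A}_{\mathcal W}$; applying Lemma 3.2 to the latter gives $B_i = \omega_i P_{W_i} S_{{\mathcal W},\omega}^{-1} U^* + L_i^0 U^*$ with $\sum_i (L_i^0)^* \omega_i P_{W_i}=0$. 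Specializing to $B_i = \upsilon_i Q_i^*$ and taking adjoints yields
\begin{equation*}
\upsilon_i Q_i \;=\; \omega_i U S_{{\mathcal W},\omega}^{-1} P_{W_i} + U (L_i^0)^* P_{W_i} \;=\; \bigl(\omega_i U S_{{\mathcal W},\omega}^{-1} + L_i^*\bigr) P_{W_i},
\end{equation*}
where I set $L_i := L_i^0 U^*$, after exploiting $Q_i = Q_i P_{W_i}$ to restrict to the range of $P_{W_i}$. The Bessel property of $\{L_i\}$ and the orthogonality $T_{\mathcal L}^* T_{{\mathcal W},\omega}=0$ follow directly from those of $L^0$, and the ranges and norms of the $A_i := \upsilon_i Q_i$ match (3) by the admissibility of $\{Q_i\}$.

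For the reverse direction, given the data in (3) I would set $Q_i := \upsilon_i^{-1} A_i$ for $i\notin I_0({\mathcal V},{\mathcal W})$ (and $Q_i := 0$ otherwise); admissibility is immediate from the hypotheses on ${\rm ran}(A_i)$ and $\|A_i\|$, and the factorization through $P_{W_i}$ forces $W_i^\perp \subset \ker Q_i$. A direct calculation, using $T_{\mathcal L}^* T_{{\mathcal W},\omega}=0$ and the identity $\sum_i \omega_i^2 P_{W_i} = S_{{\mathcal W},\omega}$, gives $T_{\{\upsilon_i Q_i^*\}}^* T_{{\mathcal A}_{\mathcal W}} = U$, recovering (2). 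The main obstacle is largely notational: one has to keep straight the passage between fusion-frame analysis operators (valued in ${\mathcal K}_{\mathcal W}$) and their operator-valued counterparts (valued in $\mathfrak H$), and to handle the degenerate indices in $I_0({\mathcal V},{\mathcal W})$ so that the normalization $\|Q_i\|=1$ only gets imposed where both $V_i$ and $W_i$ are nonzero.
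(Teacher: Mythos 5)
Your argument is correct and follows exactly the route the paper itself indicates (the paper omits the proof, saying it is obtained as in Theorem 3.13 of \cite{16} via Lemma 3.2 of \cite{16} and the reduction of generalized duals to ordinary duals): the identity $T_{\mathcal V,\upsilon}^*D_QT_{\mathcal W,\omega}=\sum_{i\in I}\upsilon_i\omega_iQ_i$ for admissible $\{Q_i\}$ gives $(1)\Leftrightarrow(2)$, and the parametrization of ordinary duals handles $(2)\Leftrightarrow(3)$. One small slip: the ordinary dual associated with a generalized dual $\{B_i\}$ satisfying $T_{\mathcal B}^*T_{{\mathcal A}_{\mathcal W}}=U$ is $\{B_iU^{-*}\}$, not $\{U^{-*}B_i\}$ — your subsequent displayed formula $B_i=\omega_iP_{W_i}S_{{\mathcal W},\omega}^{-1}U^*+L_i^0U^*$ is already the correct one, so nothing downstream is affected.
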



The following result is a generalization of \cite[Theorem 1.2]{5} to operator-valued dual frames as well as fusion frame duals with different proof.

\begin{theorem}\label{dual2}
Suppose that ${\mathcal A}=\{A_i\}_{i\in I}\subset B({\mathcal H},{\mathcal K})$ is an operator-valued frame and that $({\mathcal W},\omega)$ and $({\mathcal W}',\omega')$ are fusion frames for $\mathcal H$. The following assertions are hold.
\begin{enumerate}
\item The closure of the union of all sets ${\rm ran}(T_{\widetilde{A}(L)})$ is
${\mathfrak{K}}$, where $\widetilde{A}(L)$ runs through all dual operator-valued frame of ${\mathcal A}$.
\item If ${\mathcal B}=\{B_i\}_{i\in I}\subset B({\mathcal H},{\mathcal K})$ is an operator-valued Bessel sequence such that $T^*_{\mathcal B}T_{{\widetilde{A}(L)}}=0$ for every $L\in {\mathcal L}_{\mathcal A}$, then $\mathcal B$ is the null-sequence.
\item If ${\rm Inv}_l(T_{{\mathcal W},\omega})$ denotes the set of all bounded left inverse of $T_{{\mathcal W},\omega}$ and bar refers to the norm closure, then
    $$\overline{\bigcup_{U\in {\rm Inv}_l(T_{{\mathcal W},\omega})}{\rm ran}(U^*)}=\mathfrak{H}.$$
\item If every fusion frame dual $({\mathcal V},\upsilon)$ of $({\mathcal W},\omega)$ is a fusion frame dual of $({\mathcal W}',\omega')$, then
$$\omega_i P_{{\mathcal W}_i}=\omega'_i P_{{\mathcal W}'_i}\quad\quad\quad(i\in I).$$
\end{enumerate}
\end{theorem}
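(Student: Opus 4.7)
The plan is to treat the four assertions in order, since (1) supplies (2), (3) is (1) specialised to the fusion-frame setting, and parts (1)-(3) are the tools for attacking (4).

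\textbf{Sketch for (1).} I would argue that the orthogonal complement in $\mathfrak K$ of $\bigcup_{L\in\mathcal{L}_{\mathcal A}}{\rm ran}(T_{\widetilde{A}(L)})$ is trivial. For $z$ in this complement, specialising $L=0$ gives $S_{\mathcal A}^{-1}T_{\mathcal A}^*z=0$ and hence $z\in\ker T_{\mathcal A}^*$. For any candidate $0\neq z\in\ker T_{\mathcal A}^*$, the rank-one operator $L_z:x\mapsto\langle x,e\rangle z$ (for any unit $e\in\mathcal H$) lies in $\mathcal{L}_{\mathcal A}$ because $L_z^*T_{\mathcal A}x=\langle x,T_{\mathcal A}^*z\rangle e=0$, yet the orthogonality requirement forces $L_z^*z=\|z\|^2 e=0$, contradicting $z\neq 0$.

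\textbf{Sketch for (2) and (3).} Part (2) falls out immediately from (1): the hypothesis $T_{\mathcal B}^*T_{\widetilde{A}(L)}=0$ places $T_{\mathcal B}y$ in the orthogonal complement of every ${\rm ran}(T_{\widetilde{A}(L)})$, which by (1) is $\{0\}$; hence $T_{\mathcal B}=0$ and each $B_i=0$. For part (3), any bounded left inverse $U$ of $T_{\mathcal W,\omega}$ satisfies $(U-S_{\mathcal W,\omega}^{-1}T_{\mathcal W,\omega}^*)T_{\mathcal W,\omega}=0$, so $L:=(U-S_{\mathcal W,\omega}^{-1}T_{\mathcal W,\omega}^*)^*$ lies in $\mathcal{L}_{\mathcal A_{\mathcal W}}$ and $U^*=T_{\widetilde{A_{\mathcal W}}(L)}$; conversely, every such $T_{\widetilde{A_{\mathcal W}}(L)}$ is the adjoint of a left inverse. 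Applying (1) to $\mathcal A_{\mathcal W}$ then delivers the density claim.

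\textbf{Sketch for (4).} This is the main obstacle. My plan is to translate the hypothesis into a pointwise identity via Proposition~\ref{p8}. Every fusion frame dual $(\mathcal V,\upsilon)$ of $(\mathcal W,\omega)$ comes with a $(\mathcal V,\mathcal W)$-admissible family $\{Q_i\}$ such that $\mathcal B:=\{\upsilon_iQ_i^*\}$ is an operator-valued dual of $\mathcal A_{\mathcal W}$ with ${\rm ran}(B_i)\subseteq W_i$; by hypothesis the same $(\mathcal V,\upsilon)$ carries a $(\mathcal V,\mathcal W')$-admissible family $\{Q'_i\}$ producing an operator-valued dual $\mathcal B':=\{\upsilon_i(Q'_i)^*\}$ of $\mathcal A_{\mathcal W'}$ with ${\rm ran}(B'_i)\subseteq W'_i$. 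Parametrising $\mathcal B$ through the $L\in\mathcal{L}_{\mathcal A_{\mathcal W}}$ of Proposition~\ref{p8}(3), I would let $L$ sweep a rich family; the identities
\begin{equation*}
\sum_i\omega_iB_i^*P_{W_i}=I=\sum_i\omega'_i(B'_i)^*P_{W'_i}
\end{equation*}
together with the orthogonality relations $B_i^*P_{W_i^\perp}=0$ and $(B'_i)^*P_{(W'_i)^\perp}=0$ should, after a density argument in the spirit of (1), yield the pointwise relations $\omega_iP_{W_i}=\omega'_iP_{W'_i}P_{W_i}$ and $\omega'_iP_{W'_i}=\omega_iP_{W_i}P_{W'_i}$. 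A short case analysis depending on whether $\omega_i=\omega'_i$ or $P_{W_i}P_{W'_i}=0$ then forces $\omega_iP_{W_i}=\omega'_iP_{W'_i}$.

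The genuine difficulty is that the admissibility constraints for $(\mathcal V,\mathcal W)$ and $(\mathcal V,\mathcal W')$ differ, so $\mathcal B$ and $\mathcal B'$ are a priori distinct sequences and cannot be equated term by term from the sum identities alone. My strategy to bypass this is to choose $L$ so as to localise the support of $B_i^*$ on well-selected subspaces and then invoke part (2) on the Bessel sequence $\{\omega_iP_{W_i}-\omega'_iP_{W'_i}\}$ to promote the global identity into per-index information. This localisation-plus-density step is where I expect the argument to demand the most care.
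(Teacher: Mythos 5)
Your treatments of (1)--(3) are correct and essentially coincide with the paper's. For (1) the paper likewise reduces to a vector $z$ orthogonal to all the ranges, deduces $T_{\mathcal A}^*z=0$ from the canonical dual, and derives a contradiction with a rank-one member of $\mathcal L_{\mathcal A}$ (it uses $\vartheta_e f=\langle f,e\rangle\{y_i\}_{i\in I}$ composed with $P_{\ker T_{\mathcal A}^*}$, where your $L_z$ maps directly into $\ker T_{\mathcal A}^*$ --- same idea, yours marginally cleaner); (2) is the same one-line consequence; and your identification $\{U^*:U\in{\rm Inv}_l(T_{\mathcal W,\omega})\}=\{T_{\widetilde{A}(L)}:L\in\mathcal L_{\mathcal A_{\mathcal W}}\}$ is exactly the ``similar'' argument the paper alludes to for (3). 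One imprecision you share with the paper: triviality of the orthogonal complement of $\bigcup_L{\rm ran}(T_{\widetilde A(L)})$ only gives density of its \emph{span}, and the union is not a subspace (a nonzero $k\in\ker T_{\mathcal A}^*$ lies in no single range, only in the closure). This is easily patched by direct approximation: for $y=T_{\mathcal A}u+k$, suitable rank-one $L$ with range in $\mathbb{C}k$ give $T_{\widetilde A(L)}(\varepsilon x_0)=\varepsilon T_{\mathcal A}S_{\mathcal A}^{-1}x_0+k\to k$.

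The genuine gap is (4), and you have named it yourself. Your sketch stops exactly at the step carrying all the content: producing, for a fixed index $i$, enough fusion frame duals of $(\mathcal W,\omega)$ that the difference of the two duality identities isolates $\omega_iP_{W_i}-\omega_i'P_{W_i'}$. The ``localisation-plus-density'' device is never specified, and it is precisely the admissibility constraints of Definition 3.1 and Proposition \ref{p8}(3) (${\rm ran}(Q_j)\subset V_j$, $W_j^\perp\subset\ker Q_j$, the normalisations $\|Q_j\|=1$ and $\|A_j\|=\upsilon_j$ off $I_0$) that obstruct free perturbation of a single component --- this is the obstacle you flag but do not overcome. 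A secondary point: the intermediate relations $\omega_iP_{W_i}=\omega_i'P_{W_i'}P_{W_i}$ and $\omega_i'P_{W_i'}=\omega_iP_{W_i}P_{W_i'}$ would indeed suffice (taking adjoints shows the projections commute, and then each relation forces inclusion of ranges and equality of weights, with no case analysis needed), but they are asserted rather than derived. In fairness, the paper itself omits the proofs of (3) and (4), declaring them ``similar'' to (1)--(2) via Proposition \ref{p8}; your proposal is therefore no less complete than the printed argument, but as it stands it is a plan for (4), not a proof.
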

\begin{proof}
We prove the assertions (1) and (2); since, with the aid of Proposition \ref{p8}, the proof of (3) and (4) are similar.

We first prove (1), which is the essential part of the theorem.
To this end, suppose that we had an element $\{y_i\}_{i\in I}$ in $\mathfrak{K}$ not belonging to
$\overline{\bigcup_{L\in {\mathcal L}_{\mathcal A}}{\rm ran}(T_{\widetilde{A}(L)})}$. Then, by the Hahn-Banach and Riesz Representation Theorems, we would have a
$\{z_i\}_{i\in I}$ in $\mathfrak{K}$ such that $\big<\{z_i\}_{i\in I},\{y_i\}_{i\in I}\big>\neq 0$ and
$$\{z_i\}_{i\in I}\in {\Big(}\;\overline{\bigcup_{L\in {\mathcal L}_{\mathcal A}}{\rm ran}(T_{\widetilde{A}(L)})}\;{\Big)}^\perp.$$
In particular, $\{z_i\}_{i\in I}\perp{\rm ran}(T_{\widetilde{A}(L)})$ for all $L\in {\mathcal L}_{\mathcal A}$.
However, this is
not possible; This is because of, if we define $\vartheta_e:{\mathcal H}\rightarrow{\mathfrak{K}}$ by $\vartheta_e(f):=\big<f,e\big>\{y_i\}_{i\in I}$, where
$e\in {\mathcal H}$ is such that $\|e\|=1$. Then $\vartheta_e$
is in $B({\mathcal H},{\mathfrak{K}})$ and $\vartheta_e(e)=\{y_i\}_{i\in I}$.
Moreover, for $\widetilde{A}(0)$ and $\widetilde{A}(P_{\ker(T_{\mathcal A}^*)}\vartheta_e)$, we have
$$T_{\widetilde{A}(P_{\ker(T_{\mathcal A}^*)}\vartheta_e)}^*\{z_i\}_{i\in I}=0=T_{\widetilde{A}(0)}^*\{z_i\}_{i\in I}=S_{\mathcal A}^{-1}T_{\mathcal A}^*\{z_i\}_{i\in I}.$$
It follows that $T_{\mathcal A}^*\{z_i\}_{i\in I}=0$ and thus
\begin{align*}
\big<\{z_i\}_{i\in I},\{y_i\}_{i\in I}\big>&=\big<\{z_i\}_{i\in I},\vartheta_e(e)\big>\\
&=\big<\vartheta_e^*P_{\ker(T_{\mathcal A}^*)}\{z_i\}_{i\in I},e\big>\\
&=\big<(T_{\widetilde{A}(0)}^*+\vartheta_e^*P_{\ker(T_{\mathcal A}^*)})\{z_i\}_{i\in I},e\big>\\
&=\big<T_{\widetilde{A}(P_{\ker(T_{\mathcal A}^*)}\vartheta_e)}^*\{z_i\}_{i\in I},e\big>\\
&=0.
\end{align*}
This contradiction proves that (1) holds.

To prove (2), we note that the hypothesis $T^*_{\mathcal B}T_{{\widetilde{A}(L)}}=0$ for every $L\in {\mathcal L}_{\mathcal A}$, together with the density of $\cup_{L\in {\mathcal L}_{\mathcal A}}{\rm ran}(T_{\widetilde{A}(L)})$ in $\mathfrak{K}$ imply that $\ker(T^*_{\mathcal B})={\mathfrak{K}}$. This says that ${\rm ran}(T_{\mathcal B})=\{0\}$ and thus $B_i=0$ for all $i\in I$.
\end{proof}


Since their introduction in 2007 (see \cite{balaz3}), ordinary Bessel multipliers, as a generalization of Gabor multipliers \cite{feichmulti1}, have been extensively
generalized and studied, see for example \cite{ar,5,j,jc,msh}. The reader will remark that invertible Bessel multipliers is a proper generalization of duality notion in Hilbert spaces which permits us to have different reconstruction strategies.
However, there has only been one approach yet for studying the invertibility of fusion frame multipliers \cite{msh}.
In the following, we want to study a new concept of
Bessel fusion multipliers in Hilbert spaces, which is a slightly modified version of \cite{ar,msh}. In particular, we
show that with this definition in many cases Bessel fusion multipliers behave
similar to ordinary Bessel multipliers, see Remark \ref{tojih} below.

Our definition of the term ``Bessel fusion multiplier'' is given in the following.
Before that, let us note that if we associate to a sequence $m=\{m_i\}_{i\in I}$ and ${\mathcal R}=\{R_i\}_{i\in I}\in\ell^\infty(I,B({\mathcal H}))$ the following bounded operators:
$${\mathcal M}_m:{\mathfrak{H}}\rightarrow{\mathfrak{H}};\quad \{x_i\}_{i\in I}\mapsto\{m_ix_i\}_{i\in I},$$
and
$$D_{\mathcal R}:{\mathfrak{H}}\rightarrow{\mathfrak{H}};\quad \{x_i\}_{i\in I}\mapsto\{R_ix_i\}_{i\in I},$$
then
$${\bf M}_{m{\mathcal R},{\mathcal V},{\mathcal W}}:=T^*_{{\mathcal V},\upsilon}{\mathcal M}_mD_{\mathcal R}T_{{\mathcal W},\omega}=T^*_{{\mathcal V},\upsilon}D_{m\mathcal R}T_{{\mathcal W},\omega}$$
is well-defined and $$\|{\bf M}_{m{\mathcal R},{\mathcal V},{\mathcal W}}\|\leq\sqrt{\beta_{{\mathcal V}_\upsilon}\beta_{{\mathcal W}_\omega}}\|m\|_\infty\|{\mathcal R}\|_\infty,$$
where here and in the sequel $D_{m\mathcal R}={\mathcal M}_m\circ D_{\mathcal R}$ and $m{\mathcal R}$ refers to the sequence $\{m_iR_i\}_{i\in I}$.

\begin{definition}
Let $({\mathcal W},\omega)$ and $({\mathcal V},\upsilon)$ be two Bessel fusion sequences for $\mathcal H$. If $m\in\ell^\infty(I)$ and ${\mathcal R}\in\ell^\infty(I,B({\mathcal H}))$, then the operator ${\bf M}_{m{\mathcal R},{\mathcal V},{\mathcal W}}$
is called a Bessel fusion multiplier with symbol $(m,{\mathcal R})$ or simply $(m,{\mathcal R})$-Bessel fusion multiplier.
\end{definition}


That $(m,{\mathcal R})$-Bessel fusion multipliers generalize the notion of fusion frame duals follows from the facts that for each $i\in I$ we have
$$P_{V_i}R_iP_{W_i}=P_{V_i}P_{V_i}R_iP_{W_i}P_{W_i},$$
and
$$W_i^\perp\subseteq \ker(P_{V_i}R_iP_{W_i})\quad\quad{\hbox{and}}\quad\quad {\rm ran}(P_{V_i}R_iP_{W_i})\subseteq V_i.$$
It also generalized the notion of ordinary Bessel multipliers. To see this, suppose that $\Phi=\{\varphi_i\}_{i\in I}$ and $\Psi=\{\psi_i\}_{i\in I}$ are two Bessel sequences in ${\mathcal H}$. If,
for every $i\in I$, $W_i:={\rm span}\{\psi_i\}$, $V_i:={\rm span}\{\varphi_i\}$, $\omega_i:=\|\psi_i\|$, $\upsilon_i:=\|\varphi_i\|$, then $({\mathcal W},\omega)$ and
$({\mathcal V},\upsilon)$ are Bessel fusion sequence which is called the Bessel fusion sequence related to $\Phi$ and $\Psi$, respectively.
In particular, if we define
$R_i:{\mathcal H}\rightarrow{\mathcal H}$ by
\[ R_ix=\left\{%
\begin{array}{ccc}
\hspace{-1.9cm}\vspace{2mm}0 &
\quad\quad\quad\quad\hbox{if}~ \varphi_i=0~{\hbox{or}}~\psi_i=0\\
\big<x,\frac{\psi_i}{\|\psi_i\|}\big>\frac{\varphi_i}{\|\varphi_i\|} & \hbox{otherwise}\\
\end{array}%
\right.\]
then $\{R_i\}_{i\in I}$ is in $\ell^\infty(I,B({\mathcal H}))$. Particularly, for each symbol $m$, the $\mathcal R$-Bessel fusion multiplier ${\bf M}_{m{\mathcal R},{\mathcal V},{\mathcal W}}$
is equal to the ordinary Bessel multiplier ${\bf M}_{m,\Phi,\Psi}(x):=\sum_{i\in I}\big<x,\psi_i\big>\varphi_i$ ($x\in{\mathcal H}$). This is because of,
for each $x\in{\mathcal H}$ we have
\begin{align*}
\sum_{i\in I}m_i\omega_i\upsilon_iP_{V_i}R_iP_{W_i}x&=\sum_{i\in I,\psi_i\neq0,\varphi_i\neq0}m_i\omega_i\upsilon_i\big<x,\frac{\psi_i}
{\|\psi_i\|}\big>\frac{\varphi_i}{\|\varphi_i\|}\\
&=\sum_{i\in I}m_i\big<x,\psi_i\big>\varphi_i.
\end{align*}


The following note is a very useful tool in our present arguments.

\begin{note}
Suppose that $m\in\ell^\infty(I)$ and that ${\mathcal R}\in\ell^\infty(I,B({\mathcal H}))$. Then it is easy to see that ${\mathcal M}_m^*$ and $D_{\mathcal R}^*$, the adjoint of ${\mathcal M}_m$ and $D_{\mathcal R}$, respectively, are given by
$${\mathcal M}_m^*={\mathcal M}_{\overline{m}}:{\mathfrak{H}}\rightarrow{\mathfrak{H}};\quad \{x_i\}_{i\in I}\mapsto\{\overline{m_i}\;x_i\}_{i\in I},$$
and
$$D_{\mathcal R}^*:{\mathfrak{H}}\rightarrow{\mathfrak{H}};\quad \{x_i\}_{i\in I}\mapsto\{R_i^*x_i\}_{i\in I},$$
where $\overline{m}$ denote the sequence $\{\overline{m_i}\}_{i\in I}\in\ell^\infty(I)$ and for each $i\in I$ the letter $\overline{m_i}$ refers to the complex conjugate of $m_i$.
Particularly, if ${\mathcal C}(m,{\mathcal R})$ refers to the following hypothesis:
$$\exists\gamma, \delta>0\quad{\hbox{such~that}}\quad\gamma\|x\|\leq\|\overline{m_j}R_j^*x\|
    \leq\delta\|x\|\quad\quad\forall x\in{\mathcal H}~~ {\hbox{and}}~~\forall j\in I,$$
then
\begin{enumerate}
\item for each $j\in I$, the operator $m_jR_j$ is invertible and the sequence ${(m{\mathcal R})^{-1}}:=\{(m_iR_i)^{-1}\}_{i\in I}$ is in $\ell^\infty(I,B({\mathcal H}))$. It follows that the bounded operator
$$D_{(m\mathcal R)^{-1}}:{\mathfrak{H}}\rightarrow{\mathfrak{H}};\quad \{x_i\}_{i\in I}\mapsto\{(m_iR_i)^{-1}x_i\}_{i\in I},$$
is the inverse of $D_{m\mathcal R}$;
\item the sequence $m\in\ell^\infty(I)$ is semi-normalized, that is,
$$\frac{\gamma}{\|{\mathcal R}\|_\infty}<\inf_{i\in I}|m_i|\leq\sup_{i\in I}|m_i|<\infty;$$
\end{enumerate}
\end{note}


The following proposition gives some necessary conditions for invertibility of
$(m,{\mathcal R})$-Bessel fusion multipliers. As usual, the excess of Bessel fusion sequence $({\mathcal W},\omega)$ is defined as $e({\mathcal W},\omega):=\dim{\Big(}\ker(T_{{\mathcal W},\omega}^*){\Big)}$.

\begin{proposition}\label{gelar}
Let $({\mathcal W},\omega)$ and $({\mathcal V},\upsilon)$ be two Bessel fusion sequences for $\mathcal H$ and let $m\in\ell^\infty(I)$ and ${\mathcal R}\in\ell^\infty(I,B({\mathcal H}))$. Assume that the Bessel fusion multiplier ${\bf M}_{m{\mathcal R},{\mathcal V},{\mathcal W}}$ is invertible. Hence,
\begin{enumerate}
\item The fusion sequences $({\mathcal W},\omega)$, $({\mathcal V},\upsilon)$, $({\mathcal W},m^{av}\omega)$ and $({\mathcal V},m^{av}\upsilon)$ are fusion frames for $\mathcal H$, where $m^{av}:=\{|m_i|\}_{i\in I}$, $m^{av}\omega:=\{|m_i|\omega_i\}_{i\in I}$ and $m^{av}\upsilon:=\{|m_i|\upsilon_i\}_{i\in I}$.
    \item If $\inf_{i\in I}|m_i|>0$, then $$e({\mathcal W},\omega)=e({\mathcal W},m^{av}\omega)\quad\quad\quad{\hbox{and}}\quad\quad\quad e({\mathcal V},\upsilon)=e({\mathcal V},m^{av}\upsilon).$$
    \item If ${\mathcal C}(m,{\mathcal R})$ holds, then $({\mathcal W},\omega)$ and $({\mathcal V},\upsilon)$ have the same excess.
\end{enumerate}
\end{proposition}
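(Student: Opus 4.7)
The overall strategy rests on two complementary factorizations of $D_{m{\mathcal R}}$. The first is the immediate bound $\|D_{m{\mathcal R}}\|\leq\|m\|_{\infty}\|{\mathcal R}\|_{\infty}$, which through ${\bf M}:={\bf M}_{m{\mathcal R},{\mathcal V},{\mathcal W}}=T^*_{{\mathcal V},\upsilon}D_{m{\mathcal R}}T_{{\mathcal W},\omega}$ lets the invertibility of ${\bf M}$ be converted into lower bounds for $T_{{\mathcal W},\omega}$ (and, via ${\bf M}^*$, for $T_{{\mathcal V},\upsilon}$). The second factorization is $D_{m{\mathcal R}}=\widetilde D_{m{\mathcal R}}\circ{\mathcal M}_{m^{av}}$, where ${\mathcal M}_{m^{av}}\{z_i\}=\{|m_i|z_i\}$ (bounded by $\|m\|_\infty$) and $\widetilde D_{m{\mathcal R}}\{z_i\}=\{\sigma_iR_iz_i\}$ with $\sigma_i:=m_i/|m_i|$ for $m_i\neq 0$ and $\sigma_i:=0$ otherwise (bounded by $\|{\mathcal R}\|_\infty$). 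A direct check yields ${\mathcal M}_{m^{av}}T_{{\mathcal W},\omega}=T_{{\mathcal W},m^{av}\omega}$, and taking adjoints yields the analogous factorization for ${\bf M}^*$ involving $T_{{\mathcal V},m^{av}\upsilon}$.

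For \emph{(1)}, invertibility of ${\bf M}$ gives $\|x\|\leq\|{\bf M}^{-1}\|\|T^*_{{\mathcal V},\upsilon}\|\|D_{m{\mathcal R}}\|\|T_{{\mathcal W},\omega}x\|$ for every $x\in{\mathcal H}$, so $T_{{\mathcal W},\omega}$ is bounded below; combined with the Bessel hypothesis this upgrades $({\mathcal W},\omega)$ to a fusion frame. The same estimate applied to ${\bf M}^*=T^*_{{\mathcal W},\omega}D^*_{m{\mathcal R}}T_{{\mathcal V},\upsilon}$ handles $({\mathcal V},\upsilon)$. Substituting the alternative factorizations ${\bf M}=T^*_{{\mathcal V},\upsilon}\widetilde D_{m{\mathcal R}}T_{{\mathcal W},m^{av}\omega}$ and ${\bf M}^*=T^*_{{\mathcal W},\omega}\widetilde D_{m{\mathcal R}}^*T_{{\mathcal V},m^{av}\upsilon}$, the same bounded-below argument gives the fusion frame property for $({\mathcal W},m^{av}\omega)$ and $({\mathcal V},m^{av}\upsilon)$.

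For \emph{(2)}, the hypothesis $\inf_i|m_i|>0$ together with $m\in\ell^{\infty}(I)$ makes $\Psi:{\mathfrak H}\to{\mathfrak H}$, $\Psi\{z_i\}=\{|m_i|z_i\}$, a bounded invertible operator. A direct computation gives $T^*_{{\mathcal W},m^{av}\omega}=T^*_{{\mathcal W},\omega}\circ\Psi$, whence $\ker(T^*_{{\mathcal W},m^{av}\omega})=\Psi^{-1}(\ker(T^*_{{\mathcal W},\omega}))$ and the two kernels have equal dimension; the identity for $({\mathcal V},\upsilon)$ versus $({\mathcal V},m^{av}\upsilon)$ is entirely analogous.

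For \emph{(3)}, the hypothesis ${\mathcal C}(m,{\mathcal R})$ together with the preceding note makes $D_{m{\mathcal R}}$ invertible. The key device is the oblique projection
$$P:=T_{{\mathcal W},\omega}\,{\bf M}^{-1}\,T^*_{{\mathcal V},\upsilon}\,D_{m{\mathcal R}}:{\mathfrak H}\to{\mathfrak H},$$
which satisfies $P^2=P$ by the cancellation $T^*_{{\mathcal V},\upsilon}D_{m{\mathcal R}}T_{{\mathcal W},\omega}={\bf M}$. Since $T_{{\mathcal W},\omega}$ is injective with closed range (by part (1)) and $D_{m{\mathcal R}}$ is invertible, one verifies ${\rm ran}(P)={\rm ran}(T_{{\mathcal W},\omega})$ and $\ker(P)=D_{m{\mathcal R}}^{-1}(\ker(T^*_{{\mathcal V},\upsilon}))$. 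The associated topological direct sum ${\mathfrak H}={\rm ran}(T_{{\mathcal W},\omega})\oplus D_{m{\mathcal R}}^{-1}(\ker(T^*_{{\mathcal V},\upsilon}))$ then forces
$$e({\mathcal W},\omega)=\dim\ker(T^*_{{\mathcal W},\omega})=\dim D_{m{\mathcal R}}^{-1}(\ker(T^*_{{\mathcal V},\upsilon}))=\dim\ker(T^*_{{\mathcal V},\upsilon})=e({\mathcal V},\upsilon),$$
where closedness of ${\rm ran}(T_{{\mathcal W},\omega})$ identifies its codimension with $\dim\ker(T^*_{{\mathcal W},\omega})$ and invertibility of $D_{m{\mathcal R}}$ ensures $D_{m{\mathcal R}}^{-1}$ preserves dimension. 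The main obstacle I anticipate is pinning down this projection $P$ and independently describing its range and kernel; once that algebraic setup is secured, the excess identities fall out of dimension counting applied to the two natural descriptions of the same complement of ${\rm ran}(T_{{\mathcal W},\omega})$ in ${\mathfrak H}$.
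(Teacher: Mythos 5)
Your proposal is correct and follows essentially the same route as the paper: part (1) is the same ``invertibility of ${\bf M}$ forces $T_{{\mathcal W},\omega}$ (resp.\ $T_{{\mathcal W},m^{av}\omega}$, and their ${\mathcal V}$-counterparts via ${\bf M}^*$) to be bounded below'' estimate, merely packaged through the factorization $D_{m{\mathcal R}}=\widetilde D_{m{\mathcal R}}{\mathcal M}_{m^{av}}$ instead of an explicit Cauchy--Schwarz computation; part (2) is identical; and your idempotent $P$ in part (3) is exactly $Id_{\mathfrak H}-U$ for the operator $U$ the paper uses, with the dimension count done by comparing the two complements of ${\rm ran}(T_{{\mathcal W},\omega})$ rather than by the paper's two one-sided inequalities.
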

\begin{proof}
(1) For briefness, we only show that $({\mathcal W},m\omega)$ satisfies the lower fusion frame condition. To this end, suppose that $x$ is an arbitrary element of $\mathcal H$. We observe that
\begin{align*}
\|{\bf M}_{m{\mathcal R},{\mathcal V},{\mathcal W}}x\|^2&=\sup_{\|y\|=1}|\big<T^*_{{\mathcal V},\upsilon}{\mathcal M}_mD_{\mathcal R}T_{{\mathcal W},\omega}x,y\big>|^2\\
&=\sup_{\|y\|=1}|\big<\sum_{i\in I}m_i\upsilon_i\omega_iP_{V_i}R_iP_{W_i}x,y\big>|^2\\
&\leq\sup_{\|y\|=1}\sum_{i\in I}|m_i|^2\omega_i^2\|P_{W_i}x\|^2\sum_{i\in I}\upsilon_i^2\|R_i^*P_{V_i}y\|^2\\
&\leq B_{{\mathcal V}_\upsilon}\|{\mathcal R}\|_\infty \sum_{i\in I}|m_i|^2\omega_i^2\|P_{W_i}x\|^2.
\end{align*}
We now invoke the inequality $\|x\|^2\leq\|{\bf M}_{m{\mathcal R},{\mathcal V},{\mathcal W}}^{-1}\|^2\|{\bf M}_{m{\mathcal R},{\mathcal V},{\mathcal W}}x\|^2$ to conclude that
$$A_{{\mathcal W}_{m^{av}\omega}}:=1/B_{{\mathcal V}_\upsilon}\|{\mathcal R}\|_\infty\|{\bf M}_{m{\mathcal R},{\mathcal V},{\mathcal W}}^{-1}\|^2$$ is
a lower fusion frame bound for $({\mathcal W},m^{av}\omega)$.

(2) We prove the first equality; the proof of the second is similar. To this end, we note that $T_{{\mathcal W},m\omega}^*=T_{{\mathcal W},\omega}^*{\mathcal M}_{m^{av}}$, where
$${\mathcal M}_{m^{av}}:{\mathfrak{H}}\rightarrow{\mathfrak{H}};\quad \{x_i\}_{i\in I}\mapsto\{|m_i|x_i\}_{i\in I}.$$
Moreover, since $\inf_{i\in I}|m_i|>0$, the bounded linear operator ${\mathcal M}_{1/m^{av}}\{x_i\}_{i\in I}:=\{\frac{1}{|m_i|}x_i\}_{i\in I}$ is the inverse of ${\mathcal M}_{m^{av}}$. It follows that
$$\ker(T_{{\mathcal W},m\omega}^*)=\ker(T_{{\mathcal W},\omega}^*{\mathcal M}_{m^{av}})={\mathcal M}_{1/m^{av}}(\ker(T_{{\mathcal W},\omega}^*)),$$
and this completes the proof of the assertion (2).

(3) Denote ${\bf M}:={\bf M}_{m{\mathcal R},{\mathcal V},{\mathcal W}}$. If we define $U:{\mathfrak{H}}\rightarrow{\mathfrak{H}}$ by $$U\{x_i\}_{i\in I}:={\Big(}Id_{\mathfrak{H}}-T_{{\mathcal W},\omega}{\bf M}^{-1}T_{{\mathcal V},\upsilon}^*D_{m\mathcal R}{\Big)}\{x_i\}_{i\in I},$$ then it is not hard to check that the nullity of the operator $T^*_{{\mathcal V},\upsilon}D_{m\mathcal R}(Id_{\mathfrak{H}}-T_{{\mathcal W},\omega}{\bf M}^{-1}T^*_{{\mathcal V},\upsilon}D_{m\mathcal R})$ implies that
$${\rm ran}(U)=\ker(T_{{\mathcal V},\upsilon}^*D_{m\mathcal R}).$$
On the other hand, using the nullity of the operator $UT_{{\mathcal W},\omega}{\bf M}^{-1}$ and the equality $${\rm ran}(T_{{\mathcal W},\omega}{\bf M}^{-1})\oplus\ker((T_{{\mathcal W},\omega}{\bf M}^{-1})^*)={\mathfrak{H}},$$
one can conclude that
$${\rm ran}(U)=U(\ker((T_{{\mathcal W},\omega}{\bf M}^{-1})^*)).$$
It follows that
$$\ker(T_{{\mathcal V},\upsilon}^*D_{m\mathcal R})=U(\ker((T_{{\mathcal W},\omega}{\bf M}^{-1})^*)).$$
Hence, we have
\begin{align*}
\dim{\Big(}\ker(T_{{\mathcal V},\upsilon}^*)
{\Big)}&=
\dim{\Big(}D_{(m{\mathcal R})^{-1}}(\ker(T_{{\mathcal V},\upsilon}^*))
{\Big)}\\&=
\dim{\Big(}\ker(T_{{\mathcal V},\upsilon}^*D_{m{\mathcal R}}){\Big)}\\
&=\dim{\Big(}U(\ker((T_{{\mathcal W},\omega}{\bf M}^{-1})^*)){\Big)}\\
&\leq\dim{\Big(}\ker((T_{{\mathcal W},\omega}{\bf M}^{-1})^*){\Big)}\\
&=\dim{\Big(}\ker(({\bf M}^{-1})^*T_{{\mathcal W},\omega}^*){\Big)}\\
&=\dim{\Big(}\ker(T_{{\mathcal W},\omega}^*){\Big)}.
\end{align*}
Similarly, one can show that
$$\dim{\Big(}\ker(T_{{\mathcal W},\omega}^*){\Big)}\leq \dim{\Big(}\ker(T_{{\mathcal V},\upsilon}^*)
{\Big)}.$$
We have now completed the proof of the assertion (3).
\end{proof}


Recall from \cite{SB2} that
in the case where $\Phi=\{\varphi_i\}_{i\in I}$ and $\Psi=\{\psi_i\}_{i\in I}$ are two Riesz bases for $\mathcal H$ and
$m$ is chosen so that
$$0<\inf_{i\in I}|m_i|\leq\sup_{i\in I}|m_i|<\infty,$$
that is, $m$ is semi-normalized, then a Riesz multiplier ${\bf M}_{m,\Phi,\Psi}=T_{\Phi}^*{\mathcal M}_{m}T_{\Psi}$ is automatically invertible without any extra assumptions on Riesz bases $\Phi$, $\Psi$. This result is
also true for Riesz fusion multiplier ${\bf M}_{m{\mathcal R},{\mathcal V},{\mathcal W}}$, whenever $m$ and $\mathcal R$ are chosen so that the hypothesis ${\mathcal C}(m,{\mathcal R})$ is valid; This is because of, in this case the operator
${\bf M}_{m{\mathcal R},{\mathcal V},{\mathcal W}}$ is the composition of three invertible operators $T^*_{{\mathcal V},\upsilon}$, $D_{m{\mathcal R}}$ and $T_{{\mathcal W},\omega}$.
This stands in contrast to the notions of Bessel fusion multipliers in the sense of \cite{ar,msh}, see Remark \ref{tojih} below.

The following theorem completely characterize invertible $(m,{\mathcal R})$-Riesz fusion multipliers. It is an analogous of the main result of \cite{SB2} for ordinary Riesz multipliers. Note that our approach is different from that of \cite[Theorem 5.1]{SB2}.
In particular, it shows that having a proper freedom in the choice of the
symbol $(m,{\mathcal R})$ permits us to have infinitely many invertible Riesz fusion multipliers as well as different reconstruction strategies.

\begin{theorem}\label{parsa1257}
Suppose that $({\mathcal W},\omega)$ is a fusion Riesz basis in $\mathcal H$.
Then the following assertions hold.
\begin{enumerate}
\item If $({\mathcal V},\upsilon)$ is a fusion Riesz basis, then the Bessel fusion multiplier ${\bf M}_{m{\mathcal R},{\mathcal V},{\mathcal W}}$ is invertible on $\mathcal H$ if and only if ${\mathcal C}(m,{\mathcal R})$ holds.
    \item If ${{\mathcal C}(m,{\mathcal R})}$ holds, then the Bessel fusion multiplier ${\bf M}_{m{\mathcal R},{\mathcal V},{\mathcal W}}$ is invertible on $\mathcal H$ if and only if $({\mathcal V},\upsilon)$ is a fusion Riesz basis.
\end{enumerate}
\end{theorem}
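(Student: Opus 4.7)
The plan is to prove both parts together by establishing, under the standing hypothesis that $({\mathcal W},\omega)$ is a fusion Riesz basis, the three-way equivalence of (a) invertibility of ${\bf M}_{m\mathcal R,\mathcal V,\mathcal W}$ on $\mathcal H$, (b) $({\mathcal V},\upsilon)$ being a fusion Riesz basis, and (c) the hypothesis ${\mathcal C}(m,{\mathcal R})$. Part~(1) is then the equivalence (a)$\Leftrightarrow$(c) under (b), and part~(2) is the equivalence (a)$\Leftrightarrow$(b) under (c). The two $(\Leftarrow)$ implications reduce to a single composition argument, while the two $(\Rightarrow)$ implications require different extractions.

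For the $(\Leftarrow)$ direction of both parts, the plan is to combine two ingredients. First, the bullet-list characterization in Section~2 says that, since $({\mathcal V},\upsilon)$ and $({\mathcal W},\omega)$ are Riesz bases, $T^{*}_{{\mathcal V},\upsilon}$ and $T_{{\mathcal W},\omega}$ are invertible in the appropriate sense. Second, item~(1) of the Note preceding Proposition~\ref{gelar} asserts that ${\mathcal C}(m,{\mathcal R})$ makes $D_{m\mathcal R}:\mathfrak{H}\to\mathfrak{H}$ invertible with bounded inverse $D_{(m\mathcal R)^{-1}}$. Writing ${\bf M}_{m\mathcal R,\mathcal V,\mathcal W}=T^{*}_{{\mathcal V},\upsilon}\,D_{m\mathcal R}\,T_{{\mathcal W},\omega}$ then exhibits the multiplier as a composition of three bounded invertible operators, precisely as remarked by the authors just before the theorem; the candidate two-sided inverse is the reverse composition of the three inverses.

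For part~(1)$\Rightarrow$, suppose both $({\mathcal W},\omega)$ and $({\mathcal V},\upsilon)$ are fusion Riesz bases and ${\bf M}_{m\mathcal R,\mathcal V,\mathcal W}$ is invertible. The Riesz basis hypotheses give bounded inverses for $T^{*}_{{\mathcal V},\upsilon}$ and $T_{{\mathcal W},\omega}$, so I would algebraically isolate $D_{m\mathcal R}$ as a composition of three invertible operators and conclude that $D_{m\mathcal R}$ itself is invertible on $\mathfrak{H}$ with bounded inverse. Reading this coordinate-wise on $\mathfrak{H}$ forces each $m_{i}R_{i}$ to be invertible with uniform upper and lower bounds on $\|m_{i}R_{i}\|$ and $\|(m_{i}R_{i})^{-1}\|$; passing to adjoints then delivers precisely the two-sided bound on $\overline{m_{j}}R_{j}^{*}$ demanded by ${\mathcal C}(m,{\mathcal R})$.

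For part~(2)$\Rightarrow$, suppose ${\mathcal C}(m,{\mathcal R})$ holds and ${\bf M}_{m\mathcal R,\mathcal V,\mathcal W}$ is invertible. Proposition~\ref{gelar}(1) already secures that $({\mathcal V},\upsilon)$ is a fusion frame, so the remaining task is to upgrade it to a Riesz basis. Here I would invoke Proposition~\ref{gelar}(3) to obtain $e({\mathcal V},\upsilon)=e({\mathcal W},\omega)$; since the Riesz basis $({\mathcal W},\omega)$ attains the minimal possible excess, the equality forces $({\mathcal V},\upsilon)$ to realize the same minimum and hence to be a fusion Riesz basis. The principal obstacle I anticipate is the bookkeeping inside the composition $T^{*}_{{\mathcal V},\upsilon}D_{m\mathcal R}T_{{\mathcal W},\omega}$: because $T_{{\mathcal W},\omega}$ maps $\mathcal H\to\mathfrak{H}$ while $T^{*}_{{\mathcal V},\upsilon}$ maps $\mathfrak{H}\to\mathcal H$, the word ``invertible'' in the Riesz basis characterization has to be read as invertibility onto the appropriate restricted ranges $\mathcal K_{\mathcal W}$ and $\mathcal K_{\mathcal V}$, and one must verify that the restricted inverses compose to a genuine two-sided bounded inverse of ${\bf M}_{m\mathcal R,\mathcal V,\mathcal W}$ rather than merely a one-sided inverse.
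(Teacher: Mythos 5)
Your skeleton is the same as the paper's: both $(\Leftarrow)$ directions come from writing ${\bf M}_{m{\mathcal R},{\mathcal V},{\mathcal W}}=T^{*}_{{\mathcal V},\upsilon}D_{m{\mathcal R}}T_{{\mathcal W},\omega}$ as a composition of three ``invertible'' operators, the forward direction of (2) comes from the excess identity of Proposition \ref{gelar}(3), and the forward direction of (1) comes from isolating the middle factor (the paper isolates $T^{*}_{{\mathcal V},\upsilon}{\mathcal M}_{m}D_{\mathcal R}$, passes to adjoints via the Open Mapping Theorem, and tests on the singly supported sequences $\{\delta_{i,j}x\}_{i\in I}$, rather than isolating $D_{m{\mathcal R}}$ outright, but the idea is the same). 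The problem is that the difficulty you flag in your final sentence is not bookkeeping: it is exactly where the argument breaks, in both directions.

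The operator $T_{{\mathcal W},\omega}$ is invertible only as a map onto ${\mathcal K}_{\mathcal W}\subsetneq\mathfrak{H}$, and $T^{*}_{{\mathcal V},\upsilon}$ is injective only on ${\mathcal K}_{\mathcal V}$ (its kernel contains every sequence with $z_{i}\in V_{i}^{\perp}$). Consequently: (i) in the $(\Leftarrow)$ direction, invertibility of $D_{m{\mathcal R}}$ on $\mathfrak{H}$ does not make the composition invertible, since nothing forces $D_{m{\mathcal R}}$ to carry ${\rm ran}(T_{{\mathcal W},\omega})$ onto a complement of $\ker(T^{*}_{{\mathcal V},\upsilon})$; for instance, with ${\mathcal H}={\mathbb C}^{2}$, $W_{i}=V_{i}={\rm span}\{e_{i}\}$, $\omega_{i}=\upsilon_{i}=m_{i}=1$ and each $R_{i}$ the rotation sending $e_{1}\mapsto e_{2}$, $e_{2}\mapsto -e_{1}$, the hypothesis ${\mathcal C}(m,{\mathcal R})$ holds and both sequences are orthonormal fusion bases, yet $P_{V_{i}}R_{i}P_{W_{i}}=0$ and so ${\bf M}_{m{\mathcal R},{\mathcal V},{\mathcal W}}=0$. (ii) In the forward direction of (1), the multiplier depends on ${\mathcal R}$ only through the compressions $P_{V_{i}}R_{i}P_{W_{i}}$, so its invertibility can never force the lower bound $\gamma\|x\|\leq\|\overline{m_{j}}R_{j}^{*}x\|$ for all $x\in{\mathcal H}$; with the same subspaces take $R_{i}=P_{W_{i}}$, so that ${\bf M}_{m{\mathcal R},{\mathcal V},{\mathcal W}}=Id_{\mathcal H}$ while $R_{j}^{*}$ annihilates $W_{j}^{\perp}\neq\{0\}$. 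Your coordinate-wise extraction of uniform bounds on $(m_{i}R_{i})^{-1}$ therefore cannot get off the ground, because $D_{m{\mathcal R}}$ is only determined on ${\mathcal K}_{\mathcal W}$ and only modulo $\ker(T^{*}_{{\mathcal V},\upsilon})$. The paper's own proof glosses over the identical point (it writes $T_{{\mathcal V},\upsilon}^{-1}\{\delta_{i,j}x\}_{i\in I}$ as though $T_{{\mathcal V},\upsilon}$ were onto all of $\mathfrak{H}$), so you have reproduced the published argument faithfully; but to make either implication correct one must add compatibility hypotheses tying ${\mathcal R}$ to the subspaces (e.g. ${\rm ran}(R_{i})\subseteq V_{i}$ and $W_{i}^{\perp}\subseteq\ker(R_{i})$, with the bounds in ${\mathcal C}(m,{\mathcal R})$ read on the relevant subspaces rather than on all of ${\mathcal H}$), and your plan, like the paper's proof, does not supply them.
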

\begin{proof}
(1) If ${\mathcal C}(m,{\mathcal R})$ holds, then part (3) of Proposition \ref{gelar} implies that
$$\dim(\ker(T_{{\mathcal W},\omega}^*))=\dim(\ker(T_{{\mathcal V},\upsilon}^*)).$$
From this, we can deduce that $\ker(T_{{\mathcal V},\upsilon}^*)$ is isomorphic to
$\ker(T_{{\mathcal W},\omega}^*)=\{0\}$. Hence, the operator $T_{{\mathcal V},\upsilon}^*$ is injective and thus $({\mathcal V},\upsilon)$ is a Riesz basis.

Conversely, suppose that ${\bf M}_{m{\mathcal R},{\mathcal V},{\mathcal W}}$ is invertible. This together with the invertibility of the operator
$T_{{\mathcal W},\omega}$ imply that $T^*_{{\mathcal V},\upsilon}{\mathcal M}_mD_{\mathcal R}$ is a bounded invertible operator. Hence, the Open Mapping Theorem guarantees that
there exist positive constants $\alpha$ and $\beta$ such that
\begin{align}\label{p10}
\alpha\|x\|\leq\|D_{\mathcal R}^*{\mathcal M}_{\overline{m}}T_{{\mathcal V},\upsilon}x\|\leq\beta\|x\|\quad\quad\quad\quad\quad(x\in{\mathcal H}).
\end{align}
Moreover, for each $x\in{\mathcal H}$ and $j\in I$, we observe that
\begin{align*}
\|\overline{m_j}R_j^*x\|&=\|D_{\mathcal R}^*{\mathcal M}_{\overline{m}}\{\delta_{i,j}x\}_{i\in I}\|\\
&=\|D_{\mathcal R}^*{\mathcal M}_{\overline{m}}T_{{\mathcal V},\upsilon}T_{{\mathcal V},\upsilon}^{-1}\{\delta_{i,j}x\}_{i\in I}\|,
\end{align*}
where $\delta_{i,j}$ refers to the Kronecker delta.
From this, by inequalities \ref{p10}, we deduce that
\begin{align*}
\frac{\alpha}{\|T_{{\mathcal V},\upsilon}\|}\|x\|&=\frac{\alpha}{\|T_{{\mathcal V},\upsilon}\|}\|\{\delta_{i,j}x\}_{i\in I}\|\\
&\leq\alpha\|T_{{\mathcal V},\upsilon}^{-1}\{\delta_{i,j}x\}_{i\in I}\|\\
&\leq\|D_{\mathcal R}^*{\mathcal M}_{\overline{m}}T_{{\mathcal V},\upsilon}T_{{\mathcal V},\upsilon}^{-1}\{\delta_{i,j}x\}_{i\in I}\|\\
&\leq\beta\|T_{{\mathcal V},\upsilon}^{-1}\|\|x\|,
\end{align*}
for all $x\in{\mathcal H}$, and this completes the proof of this part.

(2) In light of part (3) of Proposition \ref{gelar} and part (1) the proof is trivial and so the details are omitted.
\end{proof}


For the formulation of the following statements we ask the reader to recall from Section 2 that $T_{{\mathcal W},\omega}$ is an operator from $\mathcal H$ into $\mathfrak{H}$ whereas the operator $T_{\mathcal W}$ is an operator from $\mathcal H$ into ${\mathcal K}_{\mathcal W}$.
Also, it is worth noticing that, the following remark illustrates that
\begin{itemize}
\item the invertibility of a Riesz fusion multipliers in the sense of \cite{ar,msh} does not depends only on its symbol whereas the invertibility of $(m,{\mathcal R})$-Riesz fusion multipliers depend only on the chosen of their symbols;
\item the set of all invertible Riesz fusion multipliers in the sense of \cite{ar,msh} is a proper subset of the set of all invertible $(m,{\mathcal R})$-Riesz fusion multipliers.
\end{itemize}

\begin{remark}\label{tojih}
Suppose that $({\mathcal W},\omega)$ and $({\mathcal V},\upsilon)$ are two Bessel fusion sequences and that $m\in\ell^\infty(I)$.
\begin{enumerate}
\item A notion of Bessel fusion multipliers was defined in \cite{ar} as follows:
$${\bf M}_{m,{\mathcal V},{\mathcal W}}:{\mathcal H}\rightarrow{\mathcal H};\quad x\mapsto T^*_{{\mathcal V}}{\mathcal S}_mPT_{{\mathcal W}}x,$$
where ${\mathcal S}_m:{\mathcal K}_{\mathcal V}\rightarrow{\mathcal K}_{\mathcal V}$ is defined by ${\mathcal S}_m\{x_i\}_{i\in I}:=\{m_ix_i\}_{i\in I}$ and $$P:{\mathcal K}_{\mathcal W}\rightarrow{\mathcal K}_{\mathcal V};\quad P\{x_i\}_{i\in I}:=\{P_{V_i}x_i\}_{i\in I}=\{P_{V_i}|_{W_i}x_i\}_{i\in I}.$$
The reader will remark that this notion of Bessel multiplier can be considered as a special weighted version of Eq. (\ref{q-parsa}).

Now, suppose that $({\mathcal W},\omega)$ and $({\mathcal V},\upsilon)$ are Riesz fusion bases, that is, the operators $T_{\mathcal W}^*$ and $T_{\mathcal V}^*$ are injective. If  ${\bf M}_{m,{\mathcal V},{\mathcal W}}$ is invertible, then by a method similar to that of Theorem \ref{parsa1257} one can show that there exist
positive number $\gamma$ such that
$$\quad\quad\gamma\|x\|\leq\|\overline{m_i}(P_{V_i}|_{W_i})^*x\|=\|\overline{m_i}P_{W_i}|_{V_i}x\|
\quad\quad\quad\quad\quad(x\in V_i, i\in I).$$
Hence, if $m$ is semi-normalized, then we can deduce that for each $i\in I$, the operators $P_{W_i}|_{V_i}\in B(V_i,W_i)$ are bounded below and thus they are isomorphisms. We now invoke Proposition 3.3 of \cite{invar} to conclude that the Riesz fusion multiplier ${\bf M}_{m,{\mathcal V},{\mathcal W}}$ is invertible if and only if ${\mathcal H}=W_i\oplus V_i^\perp$ for all $i\in I$, which seems to be of hardly any use in applications. Hence, there exist infinitely many non-invertible Riesz fusion multipliers with semi-normalized symbol in the sense of \cite{ar}.
\item Another notion of Bessel fusion multipliers was defined in \cite{msh} as follows:
$${\bf M}_{m,{\mathcal V},{\mathcal W}}:{\mathcal H}\rightarrow{\mathcal H};\quad x\mapsto T^*_{{\mathcal V}}\phi_{{\mathcal V}{\mathcal W}}T_{{\mathcal W}}x,$$
where $\phi_{{\mathcal V}{\mathcal W}}:{\mathcal K}_{\mathcal W}\rightarrow{\mathcal K}_{\mathcal V}$ is defined by $\phi_{{\mathcal V}{\mathcal W}}\{x_i\}_{i\in I}:=\{m_iP_{V_i}S_{\mathcal W}^{-1}x_i\}_{i\in I}$.
The reader will remark that this notion of Bessel multiplier can be considered as a weighted version of Eq. (\ref{p2}).

With this definition of Bessel fusion multipliers, there exist infinitely many non-invertible Riesz fusion multipliers with semi-normalized symbol.
This is because of, if $\mathcal V$ is a G$\check{a}$vru$\c{t}$a-dual of $\mathcal W$, then, in general, $\mathcal W$ is not a G$\check{a}$vru$\c{t}$a-dual of $\mathcal V$, by what was
mentioned in Section 1. Hence, a Riesz fusion multiplier in the sense of \cite{msh} may have a right inverse which is not its left inverse, see \cite[page 7]{msh}.
On the other hand, an argument similar to the proof of Theorem \ref{parsa1257} shows that, if ${\bf M}_{m,{\mathcal V},{\mathcal W}}$ is an invertible Riesz fusion multiplier, then there exist $\gamma>0$ such that
$$\gamma\|x\|\leq\|m_iP_{V_i}S_{\mathcal W}^{-1}x\|
\quad\quad\quad\quad\quad(x\in W_i, i\in I),$$
which may not be satisfied for all $V_i$ ($i\in I$).
Hence, it seems to be of hardly any use in applications.
\end{enumerate}
\end{remark}


Recall from \cite{5} that if ${\bf M}_{m,\Phi,\Psi}$ is invertible, where $\Phi$ and $\Psi$ are frames for $\mathcal H$ and $m$ is semi-normalized, then  $\Psi^\dagger:=\{{\bf M}^{-1}_{m,\Phi,\Psi}(m_i\varphi_i)\}_{i\in I}$ is the unique dual frame of $\Psi$ such that
\begin{eqnarray*}
{\bf M}^{-1}_{m,\Phi,\Psi}={\bf M}_{1/m,{\Psi}^\dagger,{\Phi}^d}\quad\quad\quad\forall~{\hbox{dual frames}}~\Phi^d~{\hbox{of}}~\Phi.
\end{eqnarray*}
But, there does not seem to be an easy way to show that
the inverse of an invertible fusion frame
multiplier can be represented as a fusion multiplier with the reciprocal symbol and fusion frame duals of the given ones. However, in light of Proposition \ref{p8} and Theorem \ref{dual2} we have the following result.

\begin{theorem}
Let $({\mathcal W},\omega)$ and $({\mathcal V},\upsilon)$ be two fusion frames for $\mathcal H$. Suppose that for $m\in\ell^\infty(I)$ and ${\mathcal R}\in\ell^\infty(I,B({\mathcal H}))$ hypothesis ${\mathcal C}(m,{\mathcal R})$ are valid.
If ${\bf M}:={\bf M}_{m{\mathcal R},{\mathcal V},{\mathcal W}}$ is invertible, then
$Q^\dag:=\{Q_i^\dag:=\omega_iP_{W_i}S^{-1}_{{\mathcal W},\omega}+\overline{m_i}L_i\}_{i\in I}$ is the
unique operator-valued dual frame of ${\mathcal A}_{\mathcal W}$ such that
$${\bf M}^{-1}=T_{Q^\dag}^*D_{(mR)^{-1}}T_{{{\mathcal A}_{\mathcal V}}^{opd}}$$
for all operator-valued dual frame ${{\mathcal A}_{\mathcal V}}^{opd}$ of ${\mathcal A}_{\mathcal V}$, where, for each $i\in I$, $L_i$ is the unique solution of the operator equation
\begin{equation}\label{parsaksh}
{\bf M}L_i^*=\upsilon_iP_{V_i}R_i-\frac{\omega_i}{m_i}{\bf M}S_{\mathcal W}^{-1}P_{W_i},
\end{equation}
which can be obtained by Douglas's Theorem \cite{dou} and its proof.
\end{theorem}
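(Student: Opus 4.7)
The plan is to proceed in four stages: (i) solve the operator equation (\ref{parsaksh}) for $L_i$ via Douglas's theorem; (ii) observe that upon substitution the formula for $(Q_i^\dag)^*$ collapses to a compact form; (iii) verify from this form that $Q^\dag$ is a dual of ${\mathcal A}_{\mathcal W}$ and satisfies the inversion identity; (iv) conclude uniqueness via Theorem \ref{dual2}(2). Since ${\bf M}$ is invertible by hypothesis, the range condition in Douglas's theorem is trivially satisfied and (\ref{parsaksh}) has the unique bounded solution
$$L_i^*={\bf M}^{-1}\upsilon_iP_{V_i}R_i-\frac{\omega_i}{m_i}S_{\mathcal W}^{-1}P_{W_i}.$$
Substituting this into the definition of $Q_i^\dag$ and taking adjoints, the two copies of $\omega_iS_{\mathcal W}^{-1}P_{W_i}$ cancel and one is left with the compact expression
$$(Q_i^\dag)^*=m_i\upsilon_i{\bf M}^{-1}P_{V_i}R_i.$$

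From this form, a Cauchy--Schwarz estimate combined with the Bessel bound of $({\mathcal V},\upsilon)$ and the uniform norm bounds on $\{m_i\}_{i\in I}$ and $\{R_i\}_{i\in I}$ shows that $Q^\dag$ is operator-valued Bessel, while the dual relation for ${\mathcal A}_{\mathcal W}$ follows immediately from
$$\sum_{i\in I}(Q_i^\dag)^*\omega_iP_{W_i}={\bf M}^{-1}\sum_{i\in I}m_i\upsilon_i\omega_iP_{V_i}R_iP_{W_i}={\bf M}^{-1}{\bf M}=Id_{\mathcal H}.$$
For the inversion identity, let ${{\mathcal A}_{\mathcal V}}^{opd}=\{B_i\}_{i\in I}$ be any operator-valued dual of ${\mathcal A}_{\mathcal V}$. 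Using the identity $R_i(m_iR_i)^{-1}=m_i^{-1}Id_{\mathcal H}$, which is valid since ${\mathcal C}(m,{\mathcal R})$ makes each $m_iR_i$ invertible, a direct computation yields
$$T_{Q^\dag}^*D_{(mR)^{-1}}T_{{{\mathcal A}_{\mathcal V}}^{opd}}x={\bf M}^{-1}\sum_{i\in I}\upsilon_iP_{V_i}B_ix.$$
Taking the adjoint of the defining dual relation $T^*_{{{\mathcal A}_{\mathcal V}}^{opd}}T_{{\mathcal A}_{\mathcal V}}=Id_{\mathcal H}$ yields $\sum_{i\in I}\upsilon_iP_{V_i}B_i=Id_{\mathcal H}$, whereupon the right-hand side collapses to ${\bf M}^{-1}x$.

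For uniqueness, suppose $Q'=\{Q'_i\}_{i\in I}$ is another operator-valued dual of ${\mathcal A}_{\mathcal W}$ satisfying the stated identity for every operator-valued dual of ${\mathcal A}_{\mathcal V}$. Set $B_i:=Q_i^\dag-Q'_i$ and $C_i:=((m_iR_i)^{-1})^*B_i$. The uniform boundedness of $\{(m_iR_i)^{-1}\}_{i\in I}$ furnished by ${\mathcal C}(m,{\mathcal R})$ ensures that ${\mathcal C}=\{C_i\}_{i\in I}$ is operator-valued Bessel, and the hypothesis translates directly into $T_{\mathcal C}^*T_{\widetilde{{\mathcal A}}_{\mathcal V}(L)}=0$ for every $L\in{\mathcal L}_{{\mathcal A}_{\mathcal V}}$. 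Theorem \ref{dual2}(2) then forces $C_i=0$ for every $i\in I$, and the invertibility of $(m_iR_i)^*$ gives $B_i=0$, so $Q'=Q^\dag$. The main obstacle is spotting the cancellation in step (ii): the operator equation (\ref{parsaksh}) is engineered precisely so that $(Q_i^\dag)^*$ reduces to $m_i\upsilon_i{\bf M}^{-1}P_{V_i}R_i$, after which the remainder of the argument is essentially algebraic bookkeeping guided by Theorem \ref{dual2}(2).
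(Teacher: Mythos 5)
Your proof is correct and follows essentially the same route as the paper's: solving (\ref{parsaksh}) for $L_i$, observing that $(Q_i^\dag)^*$ collapses to $m_i\upsilon_i{\bf M}^{-1}P_{V_i}R_i$ (equivalently $T_{Q^\dag}^*={\bf M}^{-1}T_{{\mathcal V},\upsilon}^*D_{m{\mathcal R}}$), verifying the duality and inversion identities, and invoking Theorem \ref{dual2}(2) for uniqueness. The only difference is that you make explicit the reduction of uniqueness to Theorem \ref{dual2}(2) via the auxiliary sequence $C_i=((m_iR_i)^{-1})^*(Q_i^\dag-Q_i')$, a detail the paper leaves implicit.
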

\begin{proof}
By (\ref{parsaksh}), for each $i\in I$, we have
$$L_i=\upsilon_iR_i^*P_{V_i}({\bf M}^*)^{-1}-\frac{\omega_i}{\overline{m_i}}P_{W_i}S_{\mathcal W}^{-1}.$$
Observe that ${\mathcal L}:=\{L_i\}_{i\in I}$ is an operator-valued Bessel sequence for which
\begin{align*}
T_{Q^\dag}^*\{x_i\}_{i\in I}&=\sum_{i\in I}\omega_iS_{\mathcal W}^{-1}P_{W_i}x_i+m_i\upsilon_i{\bf M}^{-1}P_{V_i}R_ix_i-\omega_iS_{\mathcal W}^{-1}P_{W_i}x_i\\
&=\sum_{i\in I}m_i\upsilon_i{\bf M}^{-1}P_{V_i}R_ix_i\\
&={\bf M}^{-1}T_{{\mathcal V},\upsilon}^*D_{m{\mathcal R}}\{x_i\}_{i\in I},
\end{align*}
for all $\{x_i\}_{i\in I}\in\mathfrak{H}$.
From this, we deduced that
$$T_{Q^\dag}^*T_{{\mathcal W},\omega}={\bf M}^{-1}T_{{\mathcal V},\upsilon}^*D_{m{\mathcal R}}T_{{\mathcal W},\omega}={\bf M}^{-1}{\bf M}=Id_{\mathcal H}.$$
It follows that $Q^\dag$ is an operator-valued dual frame of ${\mathcal A}_{\mathcal W}$.
Moreover, if ${{\mathcal A}_{\mathcal V}}^{opd}$ is an arbitrary operator-valued dual frame  of ${\mathcal A}_{\mathcal V}$, then, on the one hand, we have
\begin{align*}
{\bf M}(T_{Q^\dag}^*D_{m{\mathcal R}}T_{{{\mathcal A}_{\mathcal V}}^{opd}})={\bf M}({\bf M}^{-1}T_{{\mathcal V},\upsilon}^*D_{m{\mathcal R}}D_{(m{\mathcal R})^{-1}}T_{{{\mathcal A}_{\mathcal V}}^{opd}})=Id_{\mathcal H},
\end{align*}
and on the other hand
$$(T_{Q^\dag}^*D_{m{\mathcal R}}T_{{{\mathcal A}_{\mathcal V}}^{opd}}){\bf M}=T_{Q^\dag}^*T_{{\mathcal W},\omega}=Id_{\mathcal H}.$$
Finally, we note that the uniqueness of $Q^\dag$ follows from Theorem \ref{dual2}
and this completes the proof of the theorem.
\end{proof}


For the formulation of the next result which describe the invertibility of $(m,{\mathcal R})$-Bessel frame multipliers in terms of local frames, we need to recall the notion of local frames as well as local dual frames. Toward this end, suppose that $({\mathcal W},\omega)$
is a fusion frame for $\mathcal H$. If $\Phi_i=\{\varphi_{i,j}\}_{j\in J_i}$ ($i\in I$) is a frame for $W_i$ with frame bounds $\alpha_i$ and $\beta_i$, respectively, such that $0<\alpha=\inf_{i\in I}\alpha_i\leq\beta=\sup_{i\in I}\beta_i<\infty$, then the sequence $\{\Phi_i\}_{i\in I}$ is called local frames of $({\mathcal W},\omega)$. Also, if $\widetilde{\Phi_i}(0)=\{\widetilde{\varphi_{i,j}}(0)\}_{j\in J_i}$ is the canonical dual frame of $\Phi_i$ in $W_i$, then we call $\{\widetilde{\Phi_i}(0)\}_{i\in I}$ the local canonical dual frame of $\{\Phi_i\}_{i\in I}$.

\begin{theorem}
Let $({\mathcal W},\omega)$ and $({\mathcal V},\upsilon)$ be two fusion frames for $\mathcal H$. Suppose that for $m\in\ell^\infty(I)$ and ${\mathcal R}\in\ell^\infty(I,B({\mathcal H}))$ hypothesis ${\mathcal C}(m,{\mathcal R})$ are valid.
Then ${\bf M}_{m{\mathcal R},{\mathcal V},{\mathcal W}}$ is invertible if and only if the ordinary Bessel multiplier ${\bf M}_{\widehat{m},\Phi,\Psi}$ is invertible, where $\Phi=\{\omega_i\varphi_{i,j}\}_{i\in I,j\in J_i}$, $\Psi=\{\upsilon_iP_{V_i}R_i\widetilde{\varphi_{i,j}}(0)\}_{i\in I,j\in J_i}$ and $\widehat{m}=\{m_{i,j}:=m_i\}_{i\in I,j\in J_i}$.
\end{theorem}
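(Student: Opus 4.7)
The plan is to collapse the $(m,\mathcal R)$-Bessel fusion multiplier into an ordinary Bessel multiplier by expanding each $P_{W_i}x$ against the local frame $\{\varphi_{i,j}\}_j$, and then to read off the equivalence of invertibilities from the resulting operator identity. Before the identification step I would first confirm that $\Phi$ and $\Psi$ are Bessel sequences for $\mathcal H$, so that ${\bf M}_{\widehat m,\Phi,\Psi}$ is a bona fide bounded operator. For $\Phi=\{\omega_i\varphi_{i,j}\}$ the uniform local upper bound $\beta=\sup_i\beta_i$ and the Bessel bound of $(\mathcal W,\omega)$ combine to give
$$
\sum_{i,j}|\langle x,\omega_i\varphi_{i,j}\rangle|^2\;\leq\;\beta\sum_i\omega_i^2\|P_{W_i}x\|^2\;\leq\;\beta\,\beta_{\mathcal W_\omega}\|x\|^2.
$$
For $\Psi=\{\upsilon_iP_{V_i}R_i\widetilde{\varphi_{i,j}}(0)\}$, because $\widetilde{\varphi_{i,j}}(0)\in W_i$ and the canonical dual local frame has upper bound $1/\alpha_i\leq 1/\alpha$,
$$
\sum_j|\langle x,\upsilon_iP_{V_i}R_i\widetilde{\varphi_{i,j}}(0)\rangle|^2\;\leq\;\frac{\upsilon_i^2}{\alpha}\|R_i^*P_{V_i}x\|^2\;\leq\;\frac{\|\mathcal R\|_\infty^2\,\upsilon_i^2}{\alpha}\|P_{V_i}x\|^2,
$$
and summation in $i$ yields the global Bessel bound $\|\mathcal R\|_\infty^2\beta_{\mathcal V_\upsilon}/\alpha$.

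The main computational step would then apply the local reconstruction formula inside each subspace: since $P_{W_i}x\in W_i$ and $\varphi_{i,j}\in W_i$,
$$
P_{W_i}x=\sum_{j\in J_i}\langle P_{W_i}x,\varphi_{i,j}\rangle\widetilde{\varphi_{i,j}}(0)=\sum_{j\in J_i}\langle x,\varphi_{i,j}\rangle\widetilde{\varphi_{i,j}}(0).
$$
I would substitute this into ${\bf M}_{m\mathcal R,\mathcal V,\mathcal W}x=\sum_i m_i\omega_i\upsilon_iP_{V_i}R_iP_{W_i}x$, push the bounded operators $P_{V_i}R_i$ through the $j$-sum, and swap the resulting double summation (legitimate by the Bessel estimates above together with $m\in\ell^\infty(I)$) to obtain
$$
{\bf M}_{m\mathcal R,\mathcal V,\mathcal W}x=\sum_{i,j}\widehat m_{i,j}\langle x,\omega_i\varphi_{i,j}\rangle\,\upsilon_iP_{V_i}R_i\widetilde{\varphi_{i,j}}(0).
$$
The right-hand side, read against the defining formula of an ordinary Bessel multiplier, is precisely ${\bf M}_{\widehat m,\Phi,\Psi}$ (up to the nominal choice of which of $\Phi,\Psi$ occupies the analysis slot, a discrepancy that at worst amounts to passing to the adjoint and a complex conjugation of the symbol, and so has no bearing on invertibility). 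The stated equivalence is then immediate.

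The part I expect to require the most care is the interchange of the double summation: it is not purely formal and must be justified by the absolute summability supplied by the Bessel estimates of the first paragraph; here the uniform positivity $\alpha=\inf_i\alpha_i>0$ of the local frame bounds is essential. Note that hypothesis ${\mathcal C}(m,\mathcal R)$ does not enter the identification itself; its role is to make $\widehat m$ semi-normalized and to place ${\bf M}_{\widehat m,\Phi,\Psi}$ within the regime where the established invertibility criteria for ordinary Bessel multipliers (as in \cite{SB2,5}) give nontrivial information, without which the theorem would degenerate to the tautology that two equal operators are simultaneously invertible.
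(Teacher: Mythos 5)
Your proposal is correct and follows essentially the same route as the paper: expand $P_{W_i}x$ against the local frame and its canonical dual to identify ${\bf M}_{m\mathcal R,\mathcal V,\mathcal W}$ with ${\bf M}_{\widehat m,\Phi,\Psi}$, and verify the same two Bessel bounds $\beta\,\beta_{\mathcal W_\omega}$ and $\|\mathcal R\|_\infty^2\beta_{\mathcal V_\upsilon}/\alpha$ for $\Phi$ and $\Psi$. Your added remarks on justifying the interchange of summation and on the analysis/synthesis slot convention only make explicit points the paper leaves implicit.
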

\begin{proof}
In light of the following equalities
\begin{align*}
{\bf M}_{m{\mathcal R},{\mathcal V},{\mathcal W}}x&=\sum_{i\in I}m_i\upsilon_i\omega_iP_{V_i}R_iP_{W_i}x\\&=\sum_{i\in I}m_i\upsilon_i\omega_iP_{V_i}R_i\sum_{j\in J_i}\big<P_{W_i}x,\varphi_{i,j}\big>\widetilde{\varphi_{i,j}}(0)\\
&=\sum_{i\in I}m_i\sum_{j\in J_i}\big<x,\omega_i\varphi_{i,j}\big>\upsilon_iP_{V_i}R_i\widetilde{\varphi_{i,j}}(0)\\
&={\bf M}_{\widehat{m},\Phi,\Psi},\quad\quad\quad\quad\quad\quad
\quad\quad\quad\quad\quad\quad\quad\quad\quad\quad(x\in {\mathcal H})
\end{align*}
it suffices to show that $\Phi$ and $\Psi$ are ordinary Bessel sequences. Toward this end, suppose that $x\in {\mathcal H}$. Then, observe that
\begin{align*}
\sum_{i\in I,j\in J_i}|\big<x,\upsilon_iP_{V_i}R_i\widetilde{\varphi_{i,j}}(0)\big>|^2&=
\sum_{i\in I,j\in J_i}|\big<\upsilon_iR_i^*P_{V_i}x,\widetilde{\varphi_{i,j}}(0)\big>|^2\\
&\leq\sum_{i\in I}\upsilon_i^2\sum_{j\in J_i}|\big<R_i^*P_{V_i}x,\widetilde{\varphi_{i,j}}(0)\big>|^2\\
&\leq\sum_{i\in I}\frac{\upsilon_i^2}{\alpha_i}\|{\mathcal R}\|_\infty^2\|P_{V_i}x\|^2\\
&\leq\frac{\|{\mathcal R}\|_\infty^2}{\alpha}\beta_{{\mathcal V}_\upsilon}\|x\|^2
\end{align*}
and
\begin{align*}
\sum_{i\in I,j\in J_i}|\big<x,\omega_i\varphi_{i,j}\big>|^2&=\sum_{i\in I}\omega_i^2\sum_{j\in J_i}|\big<P_{W_i}x,\varphi_{i,j}\big>|^2\\
&\leq\sum_{i\in I}\beta_i\omega_i^2\|P_{W_i}x\|^2\\
&\leq\beta\beta_{{\mathcal W}_\omega}\|x\|^2.
\end{align*}
We have now completed the proof of the theorem.
\end{proof}


We conclude this work with the following improvement of \cite[Proposition 3.2]{ar} for $(m,{\mathcal R})$-Bessel fusion multipliers.
The details of its proof are omitted, since it can be proved by a similar argument.

\begin{proposition}
Let $({\mathcal W},\omega)$ and $({\mathcal V},\upsilon)$ be Bessel fusion sequences. Hence,
\begin{enumerate}
\item If $m\in c_0(I)$ and ${\mathcal R}\in \ell^\infty(I,B({\mathcal H}))$ is such that $\{{\rm rank}R_i\}_{i\in I}\in\ell^\infty(I)$, then $D_{m{\mathcal R}}$ is compact and, in particular, ${\bf M}_{m{\mathcal R},{\mathcal V},{\mathcal W}}$ is compact.
    \item If $m\in \ell^\infty(I)$ and ${\mathcal R}\in C_0(I,B({\mathcal H}))$ is such that $\{{\rm rank}R_i\}_{i\in I}\in\ell^\infty(I)$, then $D_{m{\mathcal R}}$ is compact and, in particular, ${\bf M}_{m{\mathcal R},{\mathcal V},{\mathcal W}}$ is compact.
        \item If $m\in \ell^p(I)$ and ${\mathcal R}\in \ell^\infty(I,B({\mathcal H}))$ is such that $\{{\rm rank}R_i\}_{i\in I}\in\ell^\infty(I)$, then $D_{m{\mathcal R}}\in C_p(\mathfrak{H},\mathfrak{H})$ and, in particular, ${\bf M}_{m{\mathcal R},{\mathcal V},{\mathcal W}}\in C_p({\mathcal H},{\mathcal H})$.

\end{enumerate}
\end{proposition}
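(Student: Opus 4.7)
The plan is, in each of the three cases, first to establish the asserted ideal property for the block-diagonal operator $D_{m\mathcal R}\colon\mathfrak{H}\to\mathfrak{H}$, and then to transfer it to ${\bf M}_{m{\mathcal R},{\mathcal V},{\mathcal W}}=T^*_{{\mathcal V},\upsilon}D_{m\mathcal R}T_{{\mathcal W},\omega}$ via the two-sided ideal property of the compact operators and of $C_p$ under composition with the bounded operators $T_{{\mathcal W},\omega}$ and $T^*_{{\mathcal V},\upsilon}$.

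For (1) and (2), I would use the standard operator-norm approximation by finite-rank operators. Set $N:=\sup_{i\in I}{\rm rank}(R_i)<\infty$. For each finite set $J\subseteq I$, let $D_{m\mathcal R}^{(J)}$ be the truncated diagonal operator whose $i$-th component is $m_iR_i$ for $i\in J$ and zero for $i\in I\setminus J$; this is a finite-rank operator of rank at most $N|J|$. Exploiting the orthogonal decomposition of $\mathfrak{H}$, a direct calculation gives
$$\|D_{m\mathcal R}-D_{m\mathcal R}^{(J)}\|\leq\sup_{i\in I\setminus J}|m_i|\,\|R_i\|.$$
Under the hypotheses of (1), this supremum is bounded by $\|\mathcal R\|_\infty\sup_{i\in I\setminus J}|m_i|$, which tends to $0$ as $J$ exhausts $I$ because $m\in c_0(I)$. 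Under the hypotheses of (2), the same supremum is bounded by $\|m\|_\infty\sup_{i\in I\setminus J}\|R_i\|$, which also tends to $0$ by the very definition of $C_0(I,B({\mathcal H}))$. In each case $D_{m\mathcal R}$ is a norm limit of finite-rank operators and hence compact.

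For (3), the key observation is that $D_{m\mathcal R}$ is the orthogonal direct sum $\bigoplus_{i\in I}m_iR_i$ acting on $\mathfrak{H}=\bigoplus_{i\in I}{\mathcal H}$, so $|D_{m\mathcal R}|=\bigoplus_{i\in I}|m_iR_i|$ and the singular values of $D_{m\mathcal R}$ are the union, counted with multiplicities, of those of its blocks. Each block $m_iR_i$ has rank at most $N$ and operator norm at most $|m_i|\|\mathcal R\|_\infty$, so $\|m_iR_i\|_p^p\leq N|m_i|^p\|\mathcal R\|_\infty^p$. Summing over $i$ yields
$$\|D_{m\mathcal R}\|_p^p=\sum_{i\in I}\|m_iR_i\|_p^p\leq N\|\mathcal R\|_\infty^p\sum_{i\in I}|m_i|^p=N\|\mathcal R\|_\infty^p\|m\|_p^p<\infty,$$
so $D_{m\mathcal R}\in C_p(\mathfrak{H},\mathfrak{H})$, and then ${\bf M}_{m{\mathcal R},{\mathcal V},{\mathcal W}}\in C_p({\mathcal H},{\mathcal H})$ by the Schatten ideal property applied to the factorization.

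The only step that genuinely needs care is the block-diagonal description of the singular values of $D_{m\mathcal R}$ used in (3); this is routine from the identity $|D_{m\mathcal R}|=\bigoplus_i|m_iR_i|$ together with the spectral description of the Schatten norm, but deserves an explicit sentence in a written-out proof. Everything else is either the operator-norm truncation argument of (1)–(2) or a direct invocation of the ideal property, and the whole proof parallels \cite[Proposition 3.2]{ar} closely enough that, as the authors remark, the details may reasonably be suppressed.
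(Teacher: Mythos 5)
Your proposal is correct, and it supplies exactly the argument the paper has in mind: the authors omit the proof, pointing to \cite[Proposition 3.2]{ar}, whose method is precisely your finite-rank truncation of the block-diagonal operator $D_{m\mathcal R}$ (together with the blockwise computation of singular values for the Schatten case) followed by the two-sided ideal property applied to the factorization ${\bf M}_{m{\mathcal R},{\mathcal V},{\mathcal W}}=T^*_{{\mathcal V},\upsilon}D_{m\mathcal R}T_{{\mathcal W},\omega}$. The one point you flag, that the singular values of a compact block-diagonal operator are the union of those of its blocks, is indeed the only step worth writing out, and your bound $\|m_iR_i\|_p^p\leq N|m_i|^p\|\mathcal R\|_\infty^p$ handles it correctly.
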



\bibliographystyle{amsplain}

\end{document}